\newtheorem{counter}{counter}[section]
\newtheorem{theorem}[counter]{Theorem}
\newtheorem{lemma}[counter]{Lemma}
\newtheorem{proposition}[counter]{Proposition}
\theoremstyle{definition}
\newtheorem{definition}[counter]{Definition}
\newtheorem{remark}[counter]{Remark}
\newtheorem{example}[counter]{Example}
\newtheorem{conjecture}[counter]{Conjecture}
\newtheorem*{notation}{Notation}
\newcommand{\QQ}{\mathbb{Q}}
\newcommand{\PP}{\mathbb{P}}
\newcommand{\Z}{\mathbb{Z}}
\newcommand{\OO}{\mathcal{O}}
\newcommand{\ev}{\text{ev}}
\newcommand{\coev}{\text{coev}}
\newcommand{\Hom}{\text{Hom}}
\newcommand{\Ext}{\text{Ext}}
\newcommand{\ext}{\text{ext}}
\newcommand{\ch}{\text{ch}}
\newcommand{\perpp}{\text{perp}}
\title{Constructive exceptional bundles on $\PP^3$}
\author{Benjamin Gould}
\begin{document}
\maketitle
\begin{abstract}
We give a complete classification of the Chern characters of constructive exceptional vector bundles on $\mathbb{P}^3$ analogous to the work of Dr\'ezet and Le Potier on $\PP^2$, and using this classification prove that a constructive exceptional bundle $E$ on $\PP^3$ with $\mu(E) \geq 0$ is globally generated.
\end{abstract}
\tableofcontents

\section{Introduction} 
In this paper, we give a complete classification of the Chern characters of constructive exceptional bundles on $\PP^3$. As an application of the classification, we show that when $E$ is a constructive exceptional bundle on $\PP^3$ with $\mu(E) \geq 0$, then $E$ is globally generated. These results directly generalize analogous results obtained on $\PP^2$ respectively by Dr\'ezet and Le Potier \cite{DLP85} and Coskun-Huizenga-Kopper \cite{CHK}.

A sheaf $E$ is called exceptional if $\Hom(E,E) = \mathbb{C}$, and $\Ext^i(E,E) = 0$ for $i > 0$. Exceptional sheaves have proved to be significant and complex in lower dimensions and on other varieties. On $\PP^2$, exceptional sheaves are stable vector bundles which have a fractal structure, and they govern the classification of stable bundles \cite{DLP85}. On other Fano varieties, exceptional sheaves span the most easily accessible part of the derived category of coherent sheaves (see, e.g., \cite{Kuz}). As such, the structure of the exceptional bundles on a variety determines a significant amount of its geometry and homological algebra. 

On projective spaces, the simplest examples of exceptional bundles are the line bundles $\mathcal{O}_{\mathbb{P}^n}(d)$, and there is a process for forming new exceptional bundles from others, called mutation. We call an exceptional bundle on $\mathbb{P}^n$ \textit{constructive} if it is obtained from line bundles through mutation. Every exceptional bundle on $\PP^2$ is constructive \cite{Dr1}. In higher dimensional projective spaces there are no known examples of non-constructive exceptional sheaves. In this paper, we focus on constructive exceptional bundles on $\mathbb{P}^3$.

On $\mathbb{P}^3$, exceptional bundles are slope-stable \cite{Zube}. By definition, they are rigid, so their moduli spaces are zero-dimensional and reduced. Moduli spaces of stable sheaves on $\PP^3$ are central objects of study in algebraic geometry. Despite the fact that they underlie many of the richest and most well-studied problems appearing in the literature and provide some of its most important examples, they are often ungovernably complex \cite{Vak} and difficult to study. Constructive exceptional bundles provide interesting examples of stable bundles and moduli spaces on $\mathbb{P}^3$. In this paper, we classify these moduli spaces using the structure of mutation on $\mathbb{P}^3$ and prove when the associated bundles are globally generated.

\subsection{Exceptional bundles on $\PP^3$} Our first main result is the classification of Chern characters of constructive exceptional bundles on $\PP^3$, along the lines of the classification of exceptional slopes on $\PP^2$ by Dr\'ezet and Le Potier \cite{DLP85}.

Constructive exceptional bundles on $\PP^n$ appear in infinite ordered collections called \textit{helices}, any $n+1$ consecutive elements of which form a full exceptional collection. The simplest example is the standard helix $\sigma_0 = (\OO(i))_{i \in \Z}$ of line bundles. Other constructive helices are obtained from $\sigma_0$ via a process called mutation.

\begin{theorem}[= Theorem \ref{theorem}] \label{main}
Let $E$ be a constructive exceptional bundle on $\PP^3$. There is a well-defined way to choose a distinguished constructive helix $\sigma = \sigma(E)$ containing $E$ and a full exceptional collection $(E_1, E, E_2, E_3)$ of sheaves in $\sigma$ containing $E$. For such a collection, we write $\ch(E) = perp(E_1, E_2, E_3)$. This association produces a bijection
	\[\epsilon_{\PP^3}: \{\text{3-adic rationals}\} \rightarrow \left\{\begin{array}{c} \text{Chern characters of constructive} \\ \text{exceptional bundles on } \PP^3 \end{array} \right\}\]
defined inductively as
	\[\epsilon_{\PP^3}(n) = n \quad n \in \Z,\]
and for $q \geq 0$,
	\begin{align*}
		\epsilon_{\PP^3}\left( \frac{3p+1}{3^{q+1}} \right) &= \text{perp}\left( \epsilon_{\PP^3}\left( \frac{p}{3^q} \right), E_2\left(\epsilon_{\PP^3}\left( \frac{p}{3^q} \right)\right), E_3\left( \epsilon_{\PP^3}\left( \frac{p}{3^q} \right)\right) \right) \\
		\epsilon_{\PP^3}\left( \frac{3p+2}{3^{q+1}} \right) &= \text{perp}\left( E_1\left( \epsilon_{\PP^3}\left( \frac{p+1}{3^q} \right) \right), \epsilon_{\PP^3}\left( \frac{p+1}{3^q} \right), E_3\left( \epsilon_{\PP^3}\left( \frac{p+1}{3^q} \right) \right) \right).
	\end{align*}
\end{theorem}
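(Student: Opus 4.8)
The plan is to establish the three required properties in turn: that the association $E \mapsto \text{perp}(E_1,E_2,E_3)$ recovers $\ch(E)$ and is well defined, that the inductively defined $\epsilon_{\PP^3}$ lands in the set of Chern characters of constructive exceptional bundles, and that it is a bijection. First I would isolate the purely numerical content of $\text{perp}$. In $K(\PP^3)\cong\Z^4$ the Euler pairing $\chi(A,B)=\sum_i(-1)^i\dim\Ext^i(A,B)$ is a nondegenerate bilinear form, and the Gram matrix of any exceptional collection is unitriangular. Consequently, given three classes $(E_1,E_2,E_3)$ forming part of a full exceptional collection, the class completing them in a prescribed slot is the unique (up to sign) primitive generator of the appropriate one-dimensional orthogonal complement; normalizing to positive rank defines $\text{perp}(E_1,E_2,E_3)$. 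I would then check that, for the canonical window $(E_1,E,E_2,E_3)$ attached to $E$, this orthogonality characterization returns exactly $\ch(E)$, so the association is well defined once the window is.

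For the canonicity of $\sigma(E)$ and of the window I would appeal to the general theory of constructive helices: every constructive exceptional bundle lies in at least one constructive helix, and among all such there is a distinguished normalization placing $E$ in the second slot of a reduced full exceptional collection. The next step is to run the recursion. Every non-integer $3$-adic rational is uniquely of the form $(3p+1)/3^{q+1}$ or $(3p+2)/3^{q+1}$, so $\epsilon_{\PP^3}$ is defined by induction on the level $q$, with the integers $\OO(n)\mapsto n$ as base case. At the inductive step I would verify that the three inputs are genuinely drawn from a full exceptional collection inside a constructive helix, so that the mutation realizing their completion produces an honest constructive exceptional bundle whose Chern character is the prescribed perp. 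Geometrically this says that between the two level-$q$ bundles $\epsilon(p/3^q)$ and $\epsilon((p+1)/3^q)$ exactly two new bundles appear at level $q+1$, one adjacent to each endpoint — the ternary analogue of the single midpoint insertion in the dyadic $\PP^2$ picture, the count $2=3-1$ reflecting the rank of $K(\PP^3)$.

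Injectivity I would obtain from a monotonicity lemma: the slope $\mu\circ\epsilon_{\PP^3}$ is strictly increasing on the $3$-adic rationals. Proving this amounts to showing, by induction on $q$, that the two inserted bundles have slopes strictly between those of the flanking endpoints and correctly ordered, which follows from the explicit effect of the perp operation on slopes. Surjectivity then follows from the defining property of constructiveness: every constructive exceptional bundle is reached from line bundles by finitely many mutations, and each such mutation matches one branch of the recursion, so the image of $\epsilon_{\PP^3}$ already contains the line bundles and is closed under the operations generating all constructive bundles; combined with the fact that each $\epsilon_{\PP^3}(r)$ is itself constructive, this gives equality of the two sets.

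I expect the main obstacle to be the step concerning canonicity together with the mutation–recursion matching. Unlike on $\PP^2$, where exceptional objects are always bundles and Dr\'ezet's mutation calculus is fully available, on $\PP^3$ one must control when a perp class is represented by a genuine bundle rather than a complex, and must verify that the distinguished helix and window are independent of the mutation path used to reach $E$, so that exactly the two congruence branches — and no others — occur at each subdivision. Pinning down this branching precisely, and confirming that the numerical $\text{perp}$ is always geometrically realized, is the crux on which both well-definedness and surjectivity rest.
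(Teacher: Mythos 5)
Your overall skeleton --- interpret $\text{perp}$ as the primitive, rank-positive generator of a one-dimensional orthogonal complement in $K(\PP^3)$, run the recursion on levels of $3$-adic rationals, get injectivity from slope monotonicity, and get surjectivity from closure of the image under mutation --- matches the paper's high-level outline, and your numerical treatment of $\text{perp}$ and the injectivity-via-monotonicity idea are both sound (the paper obtains the same slope ordering from slope-stability of exceptional bundles and the mutation exact sequences, Lemma \ref{mutations}, rather than from a formula for $\text{perp}$). But the proposal has a genuine gap at exactly the point you yourself flag as the crux, and flagging it is not the same as closing it.

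The gap is this: your surjectivity argument asserts that the image of $\epsilon_{\PP^3}$ ``contains the line bundles and is closed under the operations generating all constructive bundles.'' That is false as stated. Every helix on $\PP^3$ admits eight distinct mutations (Lemma \ref{mutations}), while the recursion realizes only three of them at each stage --- the $\gamma$-admissible ones in the paper's terminology (Definition \ref{admissible}). A constructive bundle may a priori be reached only by a path through $\text{H}_{\PP^3}$ that uses commuting or extraneous mutations, and nothing in your argument shows that such a bundle also occurs in a helix reachable by admissible mutations alone; so the image of $\epsilon_{\PP^3}$ is closed under the recursion's branches but not visibly under all mutations. The same issue undermines your canonicity step: there is no pre-existing ``general theory'' providing a distinguished helix and window for $E$ --- constructing one is the actual content of the theorem. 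The paper devotes all of Section \ref{graphs-section} to this: it introduces parallel mutations (Definition \ref{par-mut-def}), proves that any minimal-length mutation path to a helix containing $E$ can be replaced by a path of admissible mutations terminating in a helix that still contains $E$ (Proposition \ref{adm-replacement}), and then shows that the resulting graph $\Gamma_{\PP^3}'$ is a tree whose helices exhaust all constructive exceptional bundles of slope in $(0,1)$ (Proposition \ref{gamma-tree}). Well-definedness of the window, injectivity (uniqueness of non-backtracking paths on a tree), and surjectivity of $\epsilon_{\PP^3}$ all rest on these two propositions; without some device of this kind for converting arbitrary mutation paths into admissible ones, your proof does not go through.
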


The proof of Theorem \ref{main} involves a close examination of the helix theory on $\PP^3$, which we organize into a graph $\text{H}_{\PP^3}$ whose vertices are the constructive helices, and whose edges correspond to mutation. The graph $\text{H}_{\PP^3}$ carries a distinguished subgraph $\Gamma_{\PP^3}$, which is a tree, and every constructive exceptional bundle appears in a helix in $\Gamma_{\PP^3}$ (Proposition \ref{gamma-tree}). The proof of Theorem \ref{main} is an identification of this tree with the numerical data of exceptional bundles appearing in these helices.

Once the structure of the mutations forming constructive exceptional bundles is described, their study is much more accessible. Our primary application is the following theorem.

\begin{theorem}[= Theorem \ref{gg}]
Let $E$ be a constructive exceptional bundle on $\PP^3$ with $\mu(E) \geq 0$. Then $E$ is globally generated.
\end{theorem}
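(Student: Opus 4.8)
The plan is to prove global generation by induction on the mutation structure encoded in the tree $\Gamma_{\PP^3}$ and the bijection $\epsilon_{\PP^3}$ of Theorem \ref{main}. The base case consists of the line bundles $\OO(d)$ with $d \geq 0$, which are globally generated (and in fact $\mu(\OO(d)) = d$, so the hypothesis $\mu(E) \geq 0$ precisely matches $d \geq 0$). The inductive step must then show that the two mutation operations building $\epsilon_{\PP^3}((3p+1)/3^{q+1})$ and $\epsilon_{\PP^3}((3p+2)/3^{q+1})$ preserve global generation under an appropriate slope hypothesis. Concretely, I would express a newly constructed bundle $E = \text{perp}(E_1, E_2, E_3)$ via the defining exact sequence (or triangle) coming from the mutation, in which $E$ sits as a kernel, cokernel, or extension built from the evaluation/coevaluation maps among the $E_i$ and the bundle being mutated.

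The core of the argument is to track global generation through these short exact sequences. If $E$ fits into a sequence $0 \to A \to B \to E \to 0$ with $B$ globally generated and $H^1(A) = 0$, then $E$ is globally generated, since $H^0(B) \to H^0(E)$ is surjective and the fiberwise generation passes from $B$ to the quotient $E$. Dually, one controls the cases where $E$ appears as a subobject. So the scheme is: (1) write down the mutation triangle defining $E$ in terms of bundles already known (by induction) to be globally generated; (2) verify the relevant cohomology vanishing $H^i = 0$ that makes the long exact sequence split the global-generation property off onto $E$; and (3) check that the slope hypothesis $\mu(E) \geq 0$ is inherited by, or controls, the constituent bundles in the triangle so that the inductive hypothesis applies to them.

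The main obstacle I expect is step (3): ensuring the inductive hypothesis is available for all bundles appearing in the defining sequence. The mutation operations do not obviously preserve the slope inequality $\mu \geq 0$, so the constituents $E_1, E_2, E_3$ (or their mutations) may have negative slope even when $\mu(E) \geq 0$. Handling this likely requires either a more refined numerical bookkeeping — using the explicit Chern character formulas from Theorem \ref{main} to pin down the slopes and ranks of all bundles in a mutation triangle — or a strengthening of the inductive statement to a claim about the full cohomology table (e.g., that $E$ has no intermediate cohomology in a suitable range, a natural property of exceptional bundles on $\PP^3$) so that the requisite vanishing theorems are guaranteed at each stage. A secondary technical point is confirming, via the stability of exceptional bundles on $\PP^3$ established by Zube \cite{Zube}, that the evaluation map $H^0(E) \otimes \OO \to E$ has the expected rank and that its cokernel is supported in the right codimension, which is what ultimately certifies surjectivity of the evaluation fiberwise.
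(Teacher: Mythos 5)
Your overall frame (induction on the tree structure behind $\epsilon_{\PP^3}$, base case the line bundles, passing global generation through mutation exact sequences) matches the paper's strategy, and your quotient observation is correct --- in fact stronger than you state: a quotient of a globally generated sheaf is globally generated with no hypothesis on $H^1$ whatsoever, since the composite $H^0(B)\otimes\OO \to B \to E$ is surjective and factors through $H^0(E)\otimes\OO \to E$. The first genuine gap is the left-mutation case. When $E = L_C F$ is produced by a left mutation, its defining sequence $0 \to E \to C \otimes \Hom(C,F) \to F \to 0$ places $E$ as a \emph{subsheaf}, and global generation never descends to subsheaves; no cohomology vanishing can repair this, because global generation is a fiberwise surjectivity condition, not something read off the long exact sequence in cohomology. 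So your step (2) is unavailing for half of the constructive bundles. The paper's proof handles exactly this point with the helix relation: on a foundation $(A,B,E,C)$ with $E = L_C F$, one has $L_C = R_B \circ R_{F(-4)}$, so the left mutation is rewritten as a composition of two right mutations, exhibiting $E$ as a quotient of copies of the bundle $B$ immediately to its left in the helix; the quotient argument then applies once one knows $\mu(B) \geq 0$.

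The second gap is the slope control that you yourself flag as the ``main obstacle'': you propose two ways it might be handled but carry out neither, and this is where the real content of the paper's proof lies. In the right-mutation case, $E$ corresponds to $(3p+1)/3^{q+1}$ and its parent $A$ to $p/3^{q}$; since $\mu(E) \geq 0$ forces $3p+1 > 0$, hence $p \geq 0$, monotonicity of slope in the 3-adic parameter gives $\mu(A) \geq 0$ and the induction closes. In the left-mutation case, after the helix-relation reduction one must show $\mu(B) \geq 0$ for the left parent $B$, which the paper does by a case analysis of the three admissible left mutations $L0$, $L1$, $L2$ on the tree $\Gamma_{\PP^3}'$, tracking the sign of the numerator of the 3-adic rational attached to $B$ (for $L2$: if $\epsilon_{\PP^3}^{-1}(E) = (3(p+1)-1)/3^{q+1}$ has negative numerator then $p < -2/3$, so the numerator $3p+2$ of $\epsilon_{\PP^3}^{-1}(B)$ is also negative). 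Without these two ingredients --- the reduction of left mutations to right mutations via the helix condition, and the explicit numerator bookkeeping on the tree --- your outline does not close.
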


The paper is organized as follows. Section \ref{prelim-section} reviews the theory of exceptional bundles and helices on projective space. In Section \ref{graphs-section} we analyze the structure of mutations of helices on $\PP^3$ and obtain the necessary results for the classification, which we carry out in Section \ref{class-section}. In Section \ref{explicit-section} we include explicit computations of the low-order exceptional bundles on $\PP^3$, and in Section \ref{gg-section} we prove global generation.

\subsection{Acknowledgements} I am happy to thank Izzet Coskun, Jack Huizenga, and Yeqin Liu for indispensible conversations. I am happy to thank Boris Pioline for pointing out errors in Table \ref{fig:table} in a previous version of this paper.

\section{Preliminaries} \label{prelim-section}
\subsection{Exceptional bundles and helix theory on $\PP^n$} Exceptional bundles were first introduced by Dr\'ezet and Le Potier in \cite{DLP85} in order to classify the Chern characters of stable bundles on $\PP^2$ (see Section \ref{Ptwo} for further details). The theory was expanded to any projective space $\PP^n$ by Gorodentsev-Rudakov \cite{GR} and studied much more generally by the Rudakov seminar \cite{Rud}. Exceptional bundles ordinarily fit together in exceptional collections, whose transformations are known as mutations, which naturally occur in helices. We define all of these notions now. Note that the theory works just as well on any Fano variety, but we only form our definitions on $\PP^n$.

Throughout, we will let $\hom(A,B) = \dim \Hom(A,B)$ and $\ext^i(A,B) = \dim \Ext^i(A,B)$ for sheaves $A, B$. 

\begin{definition}
A sheaf $E$ on $\PP^n$ is \textit{exceptional} if $\hom(E,E) = 1$ and $\ext^i(E,E) = 0$ for $i > 0$. An ordered collection $(E_1, ..., E_k)$ of sheaves is an \textit{exceptional collection} if each $E_i$ is exceptional and in addition we have $\Ext^m(E_i, E_j) = 0$ for $m \geq 0$ when $i > j$. When the extension-closure of an exceptional collection generates the derived category $D^b(\PP^n)$, we say it is \textit{full}. A full exceptional collection on $\PP^n$ has length $n+1$. 
\end{definition}

The simplest examples of exceptional sheaves on $\PP^n$ are the line bundles $\OO_{\PP^n}(d)$. Every exceptional sheaf on $\PP^n$ is a vector bundle \cite{Pos}. One can also form these definitions for objects in the derived category; all exceptional objects on $\PP^n$ are shifts of vector bundles.

Given an ordered pair of sheaves $(E, F)$, we form the evaluation and coevaluation maps
	\[\ev: E \otimes \Hom(E,F) \rightarrow F, \quad \coev: E \rightarrow F \otimes \Hom(E,F)^{\vee}\]
each of which is associated to the identity element of the space $\Hom(E,F) \otimes \Hom(E,F)^{\vee}$. If the evaluation map is surjective it determines an exact sequence
	\[0 \rightarrow L_E F \rightarrow E \otimes \Hom(E,F) \stackrel{\ev}{\rightarrow} F \rightarrow 0\]
and if the coevaluation map is injective it determines an exact sequence
	\[0 \rightarrow E \stackrel{\coev}{\rightarrow} F \otimes \Hom(E,F)^{\vee} \rightarrow R_F E \rightarrow 0.\]
	
\begin{definition}
The sheaf $L_E F$ is the \textit{left mutation} of $F$ across $E$, and the sheaf $R_F E$ is the \textit{right mutation} of $E$ across $F$. We denote mutations by arrows $(E,F) \mapsto (F, R_FE), (L_EF, E)$.
\end{definition} 

If $(E,F)$ is an ordered pair of exceptional bundles, whenever the left and right mutation sheaves $L_E F$ and $R_F E$ are defined, they are exceptional as well \cite[Proposition 1.5]{GR}. Thus mutation provides a way to form new exceptional bundles from old ones.

\begin{example} \label{euler}
	The Euler sequence
	\[0 \rightarrow \OO_{\PP^n} \rightarrow \OO_{\PP^n}(1) \otimes \Hom(\OO_{\PP^n}, \OO_{\PP^n}(1))^{\vee} \rightarrow T_{\PP^n} \rightarrow 0\]
	realizes $T_{\PP^n}$ as the right mutation $R_{\OO_{\PP^n}(1)}\OO_{\PP^n}$. It follows that $T_{\PP^n}$ is also exceptional.
\end{example}

We call an infinite ordered sequence of sheaves $(E_i)_{i \in \mathbb{Z}}$ on $\mathbb{P}^n$ \textit{periodic} of period $m$ if for each $i$ and $k$, we have $E_{i - mk} \simeq E_i \otimes \omega_{\mathbb{P}^n}^{\otimes k} \simeq E_i(k(-n-1))$. If $m = n+1$, we will simply call the collection periodic.

From a full exceptional collection $(E_1, ..., E_{n+1})$ on $\PP^n$ one can form an infinite periodic collection $(E_i)_{i \in \Z}$ by setting $E_{i - (n+1)k} = E_i \otimes \omega^k_{\PP^n} = E_i(k(-n-1))$ for $0 \leq i \leq n+1$ and $k \in \Z$. Clearly such a collection on $\PP^n$ is determined by any $n+1$ consecutive sheaves, which we call a \textit{foundation}. 

Mutations can also be formed in these infinite periodic collections: if $(E_i, E_{i+1})$ has a surjective evaluation map (resp. injective coevaluation map), then one chooses a foundation $(E_i, E_{i+1}, ..., E_{i+n+1})$ including $(E_i, E_{i+1})$; forms the mutation pair $(L_{E_i}E_{i+1}, E_i)$; mutates the foundation to obtain a new collection $(L_{E_i}E_{i+1}, E_i, ..., E_{i+n+1})$ and then extends this collection to an infinite periodic collection. Entirely similar statements hold for right mutations.

When the operations are defined, one can iterate mutation. We write $L_j(E_i)_{i \in \Z}$ for the left mutation $L_{E_j}$ (so $L_j$ shifts/replaces the $j$th bundle in the sequence). We include the following example for clarity.

\begin{example} 
	Let $\sigma = (\mathcal{O}_{\mathbb{P}^3}(-1), \mathcal{O}_{\mathbb{P}^3}, \mathcal{O}_{\mathbb{P}^3}(1), \mathcal{O}_{\mathbb{P}^3}(2))$ be the standard exceptional collection on $\mathbb{P}^3$. We extend $\sigma$ to an infinite collection $(E_i) = (E_i)_{i \in \mathbb{Z}}$ of line bundles by setting $E_i = \mathcal{O}_{\mathbb{P}^3}(i)$; this collection is clearly periodic. 
	
	We form the mutation $L_0(E_i)$ to demonstrate the operations. The mutation $L_0(E_i)$ is the infinite collection obtained from $(E_i)$ by taking the left mutation of the pair $(E_0, E_1) = (\mathcal{O}_{\mathbb{P}^3}, \mathcal{O}_{\mathbb{P}^3}(1))$, and then extending this mutation to the rest of the collection. Observe that $L_{\mathcal{O}_{\mathbb{P}^3}}\mathcal{O}_{\mathbb{P}^3}(1) = T_{\mathbb{P}^3}^{\vee}(1)$, as follows from Example \ref{euler}. Then to perform the mutation on the collection $(E_i)$, we first mutate $\sigma$ as follows:
		\[(\mathcal{O}_{\mathbb{P}^3}(-1), \mathcal{O}_{\mathbb{P}^3}, \mathcal{O}_{\mathbb{P}^3}(1), \mathcal{O}_{\mathbb{P}^3}(2)) \mapsto (\mathcal{O}_{\mathbb{P}^3}(-1), T_{\mathbb{P}^3}^{\vee}(1), \mathcal{O}_{\mathbb{P}^3}, \mathcal{O}_{\mathbb{P}^3}(2)).\]
	Then we extend this exceptional collection to an infinite periodic collection $(F_i) = (F_i)_{i \in \mathbb{Z}}$. We set $F_0 = \mathcal{O}_{\mathbb{P}^3}(-1)$, $F_1 = T_{\mathbb{P}^3}^{\vee}(1)$, $F_2 = \mathcal{O}_{\mathbb{P}^3}$ and $F_3 = \mathcal{O}_{\mathbb{P}^3}(2)$. Then we extend to all integers by setting $F_{i-4k} = F_i(-4k)$ for each $k \in \mathbb{Z}$. E.g., we have $F_5 = F_{1+4} = T_{\mathbb{P}^3}^{\vee}(5)$.
\end{example}

\begin{definition} \label{helix-defn}
An ordered infinite periodic collection $\sigma = (E_i)_{i \in \Z}$ of exceptional bundles on $\mathbb{P}^n$ is a \textit{helix} if
	\begin{enumerate}
		\item each evaluation map $E_i \otimes \Hom(E_i, E_{i+1}) \rightarrow E_{i+1}$ is surjective,
		\item each coevaluation map $E_i \rightarrow E_{i+1} \otimes \Hom(E_i, E_{i+1})^{\vee}$ is injective, and
		\item We have $(L_{j-n} \cdots L_{j-1}L_j)\sigma = \sigma$ and $(R_{j+n} \cdots R_{j+1}R_j)\sigma = \sigma$.
	\end{enumerate}
The last condition is known as the ``helix condition.'' It is equivalent to the following condition, which will be more useful for our applications:
	\begin{enumerate}
		\item[($3^{\prime}$)] $(L_{j-(n-k)} \cdots L_{j-1}L_j)\sigma = (R_{j-(n-k)-1} \cdots R_{j-n})\sigma$ for $0 \leq k \leq n$.
	\end{enumerate}
\end{definition}

The most important example of a helix is the collection of line bundles $\sigma_0 = (\OO(i))_{i \in \Z}$; for a proof that this is indeed a helix, see \cite[Proposition 1.11]{GR}. The most important property of helices is that they are preserved under mutation: a mutation of a helix is a helix \cite[Theorem 2.1]{GR}.

\begin{definition}
A helix $\sigma$ (or bundle $E$) on $\PP^n$ is called \textit{constructive} if it can be obtained from the standard helix $\sigma_0$ by a sequence of mutations. 
\end{definition}

Beilinson showed that line bundles form \textit{full} exceptional collections, i.e., that they generate the derived category \cite{Bei}, and Bondal-Gorodentsev generalized this result to constructive helices \cite{BG}. There are no known examples of helices or exceptional bundles on any $\PP^n$ which are not constructive. 

\subsection{Exceptional bundles on $\PP^2$} \label{Ptwo} Exceptional bundles on $\PP^2$ were intensively studied by Dr\'ezet and Le Potier \cite{DLP85,Dr1,LP}. Every exceptional bundle on $\PP^2$ is constructive and slope-stable \cite{Dr1,LP}. Their moduli spaces consist of a single reduced point \cite{LP}. 

Vector bundles on $\PP^2$ are often described by their rank $r = \ch_0$, slope $\mu = \frac{\ch_1}{\ch_0}$, and discriminant $\Delta = \frac{1}{2}\mu^2 - \frac{\ch_2}{\ch_0}$. The Riemann-Roch theorem implies that $\Delta = \frac{1}{2}(1 - \frac{1}{r^2})$. The rank and degree of an exceptional bundle on $\PP^2$ are coprime, and the discriminant is a function of the rank, so the slope determines the rank and discriminant, hence the entire Chern character.

The slopes of exceptional bundles have been classified, by the following theorem of Dr\'ezet and Le Potier. (See also \cite[16.3]{LP}.) Set $\mathcal{E}$ to be the set of slopes of exceptional bundles on $\PP^2$. For any rational number $\alpha$ we define $r(\alpha)$ to be the smallest integer such that $r(\alpha)\alpha \in \Z$, and
	\[\Delta(\alpha) =  \frac{1}{2}\left( 1 - \frac{1}{r(\alpha)^2} \right), \quad \chi(\alpha) = r(\alpha)(P(\alpha) - \Delta(\alpha)),\]
where $P(x) = \frac{1}{2}x^2 + \frac{3}{2}x + 1$ is the Hilbert polynomial of $\mathcal{O}_{\mathbb{P}^2}$.	

The Riemann-Roch theorem implies that when $E$ is exceptional on $\PP^2$ with $\mu(E) = \alpha$, we have $\Delta(E) = \Delta(\alpha)$ and $\chi(E) = \chi(\alpha)$. We say a pair of rational numbers $\alpha, \beta$ is \textit{admissible} if are slope-close, cohomologically orthogonal, and have integral Euler characteristics. (See \cite[Definition 16.3.2]{LP} for a precise definition.)

For a pair of admissible rational numbers, one then introduces the ``dot'' operator:
	\[\alpha.\beta = \frac{\alpha + \beta}{2} + \frac{\Delta(\beta) - \Delta(\alpha)}{3 + \alpha - \beta}.\]

From this set-up, Dr\'ezet and Le Potier define a map $\epsilon_{\PP^2}: \{\text{dyadic rationals}\} \rightarrow \mathcal{E}$ inductively as follows: $\epsilon_{\PP^2}(n) = n$, and for $q \geq 0$,
	\[\epsilon_{\PP^2}\left( \frac{2p+1}{2^{q+1}} \right) = \epsilon_{\PP^2}\left( \frac{p}{2^q} \right).\epsilon_{\PP^2}\left( \frac{p+1}{2^q} \right).\]

\begin{theorem}\cite[Theorem 16.3.4]{LP}
The map $\epsilon_{\PP^2}$ is a bijection.
\end{theorem}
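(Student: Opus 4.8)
The plan is to establish that $\epsilon_{\PP^2}$ is a bijection from the dyadic rationals onto $\mathcal{E}$ by showing it is well-defined, injective, and surjective, following the structure of the $\PP^2$ theory that governs exceptional slopes via the dot operator. First I would verify that the inductive definition is \emph{well-defined}: whenever $\frac{p}{2^q}$ and $\frac{p+1}{2^q}$ are the slopes of exceptional bundles, I must check that the pair is admissible (slope-close, cohomologically orthogonal, with integral Euler characteristics) so that $\epsilon_{\PP^2}\left(\frac{p}{2^q}\right).\epsilon_{\PP^2}\left(\frac{p+1}{2^q}\right)$ is legitimately defined, and that the resulting value $\frac{2p+1}{2^{q+1}}$ is again an exceptional slope. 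This amounts to checking that the two exceptional bundles realizing the neighboring dyadic slopes form an exceptional pair whose mutation (encoded numerically by the dot operator) is itself exceptional, which is where the dot operator formula $\alpha.\beta = \frac{\alpha+\beta}{2} + \frac{\Delta(\beta)-\Delta(\alpha)}{3+\alpha-\beta}$ and the Riemann--Roch relation $\Delta(\alpha) = \frac{1}{2}\left(1 - \frac{1}{r(\alpha)^2}\right)$ do the work.

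Next I would prove \emph{injectivity} by induction on the denominator $2^q$. The key structural fact is that the dot operator produces a value strictly between its two arguments, i.e. $\alpha < \alpha.\beta < \beta$ whenever $\alpha < \beta$; this monotonicity means that the value $\epsilon_{\PP^2}\left(\frac{2p+1}{2^{q+1}}\right)$ inserted at each stage lies strictly between its two neighbors $\epsilon_{\PP^2}\left(\frac{p}{2^q}\right)$ and $\epsilon_{\PP^2}\left(\frac{p+1}{2^q}\right)$. Since the dyadic rationals of level $q+1$ interleave those of level $q$ in the natural order, and $\epsilon_{\PP^2}$ preserves this order, distinct dyadic rationals map to distinct slopes. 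I would make this precise by showing $\epsilon_{\PP^2}$ is strictly order-preserving, which immediately yields injectivity.

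For \emph{surjectivity} I would use the classification of exceptional bundles on $\PP^2$: every exceptional bundle is constructive, obtained from line bundles by mutation. Every exceptional bundle arises in a helix built from $\sigma_0 = (\OO(i))$ by a finite sequence of mutations, and each mutation corresponds at the numerical level to inserting a dot-value between two existing slopes. Tracking this tree of mutations against the dyadic subdivision, I would argue by induction on the number of mutations (equivalently, on the denominator of the associated dyadic rational) that every exceptional slope is hit, so that the image of $\epsilon_{\PP^2}$ is exactly $\mathcal{E}$.

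The main obstacle will be the well-definedness step, specifically verifying admissibility at every stage of the induction: one must confirm that the two neighboring exceptional bundles really do form an exceptional pair with the cohomological orthogonality and integrality conditions required for the dot operator to apply and to output an exceptional slope. This is precisely the content requiring the careful numerical bookkeeping of \cite[Definition 16.3.2]{LP} and the Riemann--Roch computation of $\chi(\alpha)$, and it is the heart of why the theorem is nontrivial rather than a formal consequence of the recursion. Once admissibility is secured at each level, the order-preserving property and the constructive classification make injectivity and surjectivity comparatively routine.
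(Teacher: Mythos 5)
This is a statement the paper itself never proves: it is quoted as background from \cite{LP} (Theorem 16.3.4, originally \cite{DLP85}), so there is no internal proof to compare you against; your proposal has to be measured against the argument in the cited source. Measured that way, your outline reproduces the standard architecture faithfully: (i) well-definedness by propagating admissibility through the induction, (ii) injectivity from strict order-preservation of $\epsilon_{\PP^2}$, coming from the betweenness property of the dot operator, and (iii) surjectivity from Dr\'ezet's theorem that every exceptional bundle on $\PP^2$ is constructive, matched against the dyadic subdivision. That is the same decomposition used in the literature, so the route is the right one.

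However, what you have is a plan whose two hand-waved steps are exactly where the content lives. First, well-definedness is not only the statement that an admissible pair $(\alpha,\beta)$ produces an exceptional slope $\alpha.\beta$; the inductive engine is the stronger claim that the two \emph{new} pairs $(\alpha,\alpha.\beta)$ and $(\alpha.\beta,\beta)$ are again admissible, since otherwise the recursion cannot advance to the next level. Relatedly, your betweenness claim $\alpha < \alpha.\beta < \beta$ is false for arbitrary $\alpha < \beta$; it requires the admissibility bounds (slope-closeness together with $\Delta \in [0,\tfrac12)$ control of the correction term), so monotonicity and well-definedness are entangled rather than sequential. Second, your surjectivity induction ``on the number of mutations (equivalently, on the denominator of the associated dyadic rational)'' assumes the equivalence it needs to prove: given an arbitrary exceptional bundle, one must show it de-mutates to exceptional bundles of \emph{smaller rank} whose slopes are \emph{adjacent} values $\epsilon_{\PP^2}(p/2^q)$ and $\epsilon_{\PP^2}((p+1)/2^q)$ at a common level --- in effect, that the mutation graph is a tree indexed by dyadic rationals. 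The standard argument therefore inducts on rank, not on mutation length, and establishing this tree/adjacency structure is the hard part; it is precisely the analogue of what the present paper must labor over on $\PP^3$ via Proposition \ref{adm-replacement} and Proposition \ref{gamma-tree}. So: correct skeleton, but the admissibility propagation and the adjacency statement \emph{are} the theorem, not verifications to be filled in afterward.
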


The conditions of admissibility encode the cohomological orthogonality of exceptional collections as in the previous section. The third condition is vacuous when $\alpha, \beta$ are the slopes of exceptional bundles. The second specifies that $\chi(E_{\beta}, E_{\alpha}) = 0$, where $E_{\mu}$ is the exceptional bundle of slope $\mu$. And one can readily check that $\epsilon(\alpha.\beta) = \mu(R_{E_{\gamma}}E_{\alpha})$ where $(E_{\gamma},E_{\alpha}, E_{\beta})$ is a full exceptional collection. Note that on $\PP^2$, the helix condition reads $R_{E_{\gamma}}E_{\alpha} = L_{E_{\alpha}}E_{\beta}$; left mutations are also right mutations. 

In Section \ref{class-section}, we will see how these properties generalize to $\PP^3$.

\subsection{Exceptional bundles and stability on $\PP^2$} Exceptional bundles on $\PP^2$ are significant in the first place because of their appearance in the classification of the Chern characters of stable bundles, due to Dr\'ezet and Le Potier. Since exceptional bundles on $\PP^2$ are stable, given an exceptional bundle $E_{\alpha}$ and a stable bundle $E$ of slope $\mu$ with $0 \leq \mu - \alpha < 3$, one sees from stability and Serre duality that
	\[\hom(E, E_{\alpha}) = 0 \quad \text{and} \quad  \ext^2(E, E_{\alpha}) = \hom(E_{\alpha}, E(-3)) = 0\]
so by Riemann-Roch
	\[P(\alpha - \mu) - \Delta_{\alpha} < \Delta(E).\]
Dr\'ezet and Le Potier's classification shows that this numerical constraint on $\Delta(E)$ is the only constraint for stability.

\begin{theorem}\cite{DLP85} \label{P2class}
	Set 
		\[\delta(\mu) = \sup_{\alpha \in \mathcal{E}, |\alpha-\mu|<3}\{P(-|\alpha - \mu|) - \Delta(\alpha)\}.\]
	Then $(r, \mu, \Delta)$ is the Chern character of a stable bundle if and only if it is exceptional, or its Euler characteristic and Chern classes are integers with
		\[\delta(\mu) \leq \Delta.\]
\end{theorem}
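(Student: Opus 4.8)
I would first complete the direction already begun before the statement. Suppose $E$ is stable of rank $r$, slope $\mu$, and discriminant $\Delta$, and suppose $E$ is not exceptional. Fix any $\alpha \in \mathcal{E}$ with $|\alpha - \mu| < 3$ and let $E_\alpha$ be the exceptional bundle of slope $\alpha$. If $\alpha \leq \mu$ I consider the Euler pairing $\chi(E, E_\alpha)$, and if $\alpha \geq \mu$ the pairing $\chi(E_\alpha, E)$. In either case a nonzero homomorphism between stable bundles would violate the slope inequality unless it were an isomorphism, which is excluded because $E$ is not exceptional; hence the degree-zero term vanishes. Serre duality rewrites the degree-two term as a $\Hom$ between stable bundles whose slopes differ by more than $3$ would be required for non-vanishing, so $|\alpha - \mu| < 3$ forces it to vanish as well. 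Thus the relevant pairing equals $-\ext^1 \leq 0$. Expanding by Riemann--Roch in the form $\chi(F,G) = r(F)r(G)\bigl(P(\mu(G)-\mu(F)) - \Delta(F) - \Delta(G)\bigr)$ and using $P(\alpha-\mu)=P(-|\alpha-\mu|)$ gives $P(-|\alpha - \mu|) - \Delta(\alpha) \leq \Delta$. Taking the supremum over admissible $\alpha$ yields $\delta(\mu) \leq \Delta$. When $E$ is itself exceptional the term $\alpha = \mu$ is excluded from the argument, and since $1 - \Delta(\mu) > \Delta(\mu)$ one sees $\delta(\mu) > \Delta(E)$; this is exactly why exceptional characters must be recorded as a separate case.

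\textbf{Reductions for sufficiency.} Tensoring by $\OO(1)$ preserves stability, fixes $r$ and $\Delta$, and shifts $\mu$ by $1$; since $\mathcal{E}$ and each function $P(-|\alpha-\mu|) - \Delta(\alpha)$ are invariant under simultaneous integer translation, $\delta$ is periodic of period $1$ and it suffices to treat slopes in one fundamental domain. Next I would establish monotonicity in $\Delta$: if a stable bundle of type $(r,\mu,\Delta)$ exists, then so does one for every larger $\Delta$ satisfying the integrality hypotheses. This follows from elementary modifications $0 \to E' \to E \to \OO_p \to 0$ at a general point, which fix $r$ and $\mu$, raise $\Delta$ by the unit step $1/r$, keep $E'$ locally free, and preserve stability of the general modification. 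Together these reduce the problem to exhibiting a single stable bundle for each character lying on, or minimally above, the wall $\Delta = \delta(\mu)$.

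\textbf{Construction on the wall.} For fixed $\mu$ the supremum defining $\delta(\mu)$ is attained (only finitely many $\alpha$ can exceed a given value, since a large value forces small $\Delta(\alpha)$, hence small rank, of which there are finitely many in a bounded slope window) by one exceptional bundle $E_\alpha$, or by two at the countably many transition slopes. On the wall the condition $\Delta = \delta(\mu)$ becomes the cohomological orthogonality $\chi(E,E_\alpha)=0$. Choosing a helix foundation $(E_{\alpha_1},E_{\alpha_2},E_{\alpha_3})$ straddling $\mu$, the Beilinson-type resolution attached to this full exceptional collection expresses the target class as the cohomology of a monad, which on the wall degenerates to a generic extension $0 \to E_{\alpha_1}^{a} \to E \to E_{\alpha_2}^{b} \to 0$ with multiplicities $a,b$ forced by matching $(r,\mu,\Delta)$. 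I would check that these multiplicities are nonnegative integers precisely when $\Delta \geq \delta(\mu)$ and the integrality hypotheses hold, so the construction is available exactly on the claimed region.

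\textbf{Main obstacle.} The crux is proving that the generic such monad cohomology (or extension) is genuinely stable, not merely semistable, and that this holds uniformly across the fractal family of walls. The natural route is to run the necessity inequality in reverse: a destabilizing subsheaf would have invariants violating one of the constraints $P(-|\beta - \mu'|) - \Delta(\beta) \leq \Delta'$, contradicting its own position relative to the walls. Because the exceptional slopes are dense and $\delta$ is only piecewise a single parabola, one must control infinitely many walls at once; I would organize this by an induction on the complexity of the exceptional bundles as indexed by the fractal labeling $\epsilon_{\PP^2}(p/2^q)$, reducing stability near a level-$q$ wall to stability of the lower-level characters bounding it, with the base case supplied by the exceptional bundles themselves. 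Openness of stability in families then upgrades generic stability to non-emptiness of the stable locus for every character in the region, completing the equivalence.
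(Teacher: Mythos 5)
This theorem is quoted by the paper from Dr\'ezet--Le Potier \cite{DLP85}; the paper itself gives no proof (it only sketches the necessity direction in the surrounding discussion), so your proposal has to be judged on its own merits. Your necessity argument is correct, and it is essentially the computation the paper sketches: stability plus Serre duality kill $\hom$ and $\ext^2$ against each nearby exceptional bundle, Riemann--Roch turns $\chi \leq 0$ into $P(-|\alpha-\mu|)-\Delta(\alpha) \leq \Delta$, and the observation that $1-\Delta(\alpha) > \Delta(\alpha)$ explains why exceptional characters must be listed as a separate case.

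The sufficiency direction, however, contains genuine gaps. First, a concrete error: the kernel $E'$ of an elementary modification $0 \to E' \to E \to \OO_p \to 0$ is \emph{never} locally free on a surface (locally it is $I_p \oplus \OO^{r-1}$, and the ideal sheaf $I_p$ of a point is not locally free), so your monotonicity step produces torsion-free sheaves, not bundles. Recovering a stable \emph{bundle} with the larger discriminant requires knowing that the moduli space of semistable sheaves with those invariants is irreducible with locally free generic member — itself one of the substantial theorems of \cite{DLP85}, not an elementary remark. Second, and more seriously, the heart of the theorem is the claim that the generic extension or monad cohomology you construct on or near the wall $\Delta = \delta(\mu)$ is actually stable, uniformly over the fractal family of walls determined by the dense set of exceptional slopes. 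This is exactly what you label the ``main obstacle,'' and you address it only with a proposed induction over the labels $\epsilon_{\PP^2}(p/2^q)$ that is never carried out; controlling all potential destabilizers at once is the bulk of Dr\'ezet--Le Potier's long argument. As written, your text is a correct treatment of the easy direction plus a reduction and a plan for the hard direction, not a proof of it.
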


\subsection{Exceptional bundles on $\PP^3$} Exceptional bundles on $\PP^3$ are not as well-understood as those on $\PP^2$, however, some important results have been obtained. Zube showed that each exceptional bundle on $\PP^3$ is slope-stable by restricting to an anticanonical K3 surface, by showing the restriction to the K3 surface is spherical and slope-stable \cite{Zube}. Nogin classified the values of full exceptional collections in the Grothendieck group (i.e. Chern characters) and showed that each is equal to the value in the Grothendieck group of a constructive exceptional collection \cite{Nog}. 

Polishchuk built on these results. It follows from the vanishing result \cite[Theorem 1.2]{Pol} that any exceptional bundle on $\mathbb{P}^3$ with the Chern character equal to that of a constructive exceptional bundle is itself constructive. It follows that the moduli space of a constructive exceptional bundle is a single reduced point. In fact it is proved that the action of the braid group on the set of \textit{simple} helices on some Fano threefolds, including $\mathbb{P}^3$, is transitive \cite[Theorem 1.1]{Pol}, using Nogin's results. This does not quite accomplish the classification of exceptional bundles on $\PP^3$, as it is not clear that an exceptional bundle can be included into a full exceptional collection.

Each full exceptional collection on $\PP^3$ has length 4, and we often work with helices via the choice of a foundation, which forms a full exceptional collection. Let $\sigma$ denote a constructive helix on $\PP^3$, and choose a foundation $(E,F,G,H)$ for $\sigma$. 

\begin{lemma} \label{mutations}
\begin{enumerate}
	\item The slopes of the elements of a foundation are ordered: 
		\[\mu(E) < \mu(F) < \mu(G) < \mu(H).\]
	\item The helix $\sigma$ has exactly 8 distinct mutations, which are given on this foundation by the mutations of the pairs
		\[(E, F), \quad (F, G), \quad (G,H),\]
	the right mutation of the pair $(H(-4), E)$, and the left mutation of the pair $(H,E(4))$.
\end{enumerate}
\end{lemma}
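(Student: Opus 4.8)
The plan is to establish the slope ordering first and then use it to organize the count of mutations. For part (1) I would argue directly from slope-stability. By Zube \cite{Zube} every exceptional bundle on $\PP^3$ is slope-stable, so each member of the foundation is stable. For a consecutive pair, helix axiom (1) makes the evaluation map $E_i \otimes \Hom(E_i,E_{i+1}) \to E_{i+1}$ surjective, which is impossible unless $\hom(E_i,E_{i+1}) > 0$; hence there is a nonzero morphism $E_i \to E_{i+1}$. Its image $I$ satisfies $\mu(E_i) \le \mu(I) \le \mu(E_{i+1})$ by stability (it is a quotient of $E_i$ and a subsheaf of $E_{i+1}$), so $\mu(E_i) \le \mu(E_{i+1})$, and equality would force the map to be an isomorphism $E_i \cong E_{i+1}$, contradicting $\Ext^m(E_{i+1},E_i) = 0$ for all $m \ge 0$ (the exceptional-collection condition, as $i+1>i$). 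Thus $\mu(E_i)<\mu(E_{i+1})$; applying this to $(E,F),(F,G),(G,H)$ yields $\mu(E)<\mu(F)<\mu(G)<\mu(H)$. This uses only that $\sigma$ is a helix of exceptional bundles, so it applies verbatim to any mutation of $\sigma$.

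For part (2) I would first reduce to a finite list. A mutation is performed at a consecutive pair $(E_i,E_{i+1})$ in the left or right direction, and axioms (1) and (2) guarantee that the evaluation and coevaluation maps of every consecutive pair are surjective and injective respectively, so both mutations are defined at every position. The periodicity $E_{i+4}=E_i(4)$ gives $L_{E_{i+4}}E_{i+5}=(L_{E_i}E_{i+1})(4)$, and likewise for right mutations, so the operation at position $i$ coincides with the one at $i+4$; hence there are at most four positions — the pairs $(E,F),(F,G),(G,H)$ and the wrap-around pair $(H,E(4))=(E_4,E_5)$ — each with a left and a right mutation, giving at most eight. The two wrap-around cases then match the stated representatives: the left mutation at position $4$ is the left mutation of $(H,E(4))$, while the right mutation at position $4$ equals, by the same periodicity, the right mutation at position $0$, i.e. of $(H(-4),E)=(E_0,E_1)$; these are exactly the representatives that keep the new bundle adjacent to the foundation.

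The delicate step is to show these eight are genuinely distinct — in contrast to $\PP^2$, where the helix condition collapses some of the mutations. A left mutation at $i$ removes $E_{i+1}$ and inserts $L_{E_i}E_{i+1}$, and a right mutation at $i$ removes $E_i$ and inserts $R_{E_{i+1}}E_i$. Each mutated collection is again a helix \cite[Theorem 2.1]{GR}, so part (1) applies to it, and reading the slope of the inserted bundle off the ordered slopes of the new foundation shows that every inserted bundle has slope strictly between two consecutive slopes of $\sigma$; in particular it is never a twist of a member of $\sigma$. Grouping the eight mutations by which $\OO(4)$-orbit of $\sigma$ they remove, they pair up two-to-one (one left, one right per removed orbit), and within each pair the two inserted bundles fall in different slope intervals of the period. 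Hence no two of the eight helices can share both their removed orbit and their inserted bundle, and all eight are distinct.

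The main obstacle is exactly this distinctness check: it is where the $\PP^2$/$\PP^3$ dichotomy lives, and it depends on the slope bookkeeping of part (1) applied to each mutated helix, together with the class formulas $\ch(L_E F)=\hom(E,F)\ch(E)-\ch(F)$ and $\ch(R_F E)=\hom(E,F)\ch(F)-\ch(E)$ to locate the relevant slope intervals. The remaining ingredients — the stability input of part (1) and the periodicity reduction — are routine.
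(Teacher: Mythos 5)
Your proof is correct. Part (1) is the paper's own argument (Zube's slope-stability plus the surjective evaluation map; the paper is terser but identical in substance), and part (2) reduces to the same eight candidates by periodicity. Where you genuinely diverge is the distinctness step. The paper applies part (1) to each mutated foundation, as you do, to place each inserted bundle strictly between two consecutive slopes of $\sigma$, but it then must compare the two new bundles landing in the \emph{same} slope interval --- e.g. $R_FE$ and $L_GH$, both in $(\mu(F),\mu(G))$ --- and it does this by noting that the two mutations commute, forming the double-mutation foundation $(F,R_FE,L_GH,G)$, and applying part (1) a second time to get $\mu(R_FE)<\mu(L_GH)$. You avoid same-interval comparisons entirely by introducing a second invariant, the $\OO(4)$-orbit that each mutation removes: mutations inserting into the same interval remove different orbits, and mutations removing the same orbit insert into different intervals, so the pair (removed orbit, interval of inserted slope) separates all eight helices. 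This buys two things: you never need the commutation/double-mutation trick, and your bookkeeping treats all eight mutations uniformly, whereas the paper's displayed inequalities cover only the six inserted bundles inside $[\mu(E),\mu(H)]$ and leave $L_EF$ and $R_HG$ implicit. What you lose is the byproduct the paper actually reuses later: the total ordering $\mu(E)<\mu(R_E(H(-4)))<\mu(L_FG)<\mu(F)<\mu(R_FE)<\mu(L_GH)<\mu(G)<\mu(R_GF)<\mu(L_H(E(4)))<\mu(H)$ coming from the double mutations is quoted verbatim in the proof of Proposition \ref{gamma-tree}, so with your version of the lemma that ordering would have to be established separately there. (One small economy: the Chern-character formulas $\ch(L_EF)=\hom(E,F)\ch(E)-\ch(F)$, $\ch(R_FE)=\hom(E,F)\ch(F)-\ch(E)$ you invoke are not needed, since part (1) applied to the mutated foundation already locates each inserted slope.)
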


\begin{proof}
For each pair $(A,B)$ of bundles appearing consecutively in a helix we can form the evaluation map $\ev: A \otimes \Hom(A,B) \rightarrow B$, which is surjective. Since exceptional bundles on $\PP^3$ are slope-stable \cite{Zube}, it follows that $\mu(A) = \mu(A \otimes \Hom(A,B)) < \text{image}(\ev) < \mu(B)$. Item (1) follows.

There are 8 possible mutations of a given helix on $\PP^3$, defined on a foundation by the statement of the Lemma; we need to show that each is distinct. Applying each possible mutation to the foundation $(E,F,G,H)$ and applying (1), we get inequalities
	\begin{align*}
		\mu(E) &< \mu(R_E(H(-4))), \mu(L_FG) < \mu(F), \\
		\mu(F) &< \mu(R_FE), \mu(L_GH) < \mu(G), \\ 
		\mu(G) &< \mu(R_GF), \mu(L_HF(4)) < \mu(H).
	\end{align*}
It remains to show that the mutations whose slopes lie between the slopes of bundles in the original foundation are similarly ordered. These mutations all commute with one another, so we can form double mutation foundations, e.g., $(F,R_FE,L_GH,G)$. The remaining inequalities follow again from Item (1).
\end{proof}



\subsection{The helix condition on $\PP^3$} \label{helix} We will repeatedly exploit the helix condition to produce new helices containing a given bundle. If $\sigma$ is a helix on $\PP^3$ with foundation $(E,F,G,H)$, we usually apply the helix condition in the forms
	\[R(E,F) = L(G,L_H(E(4))) \circ L(H, E(4)) \quad \text{and} \quad L(G,H) = R(R_{E(4)}H, F(4)) \circ R(H,E(4)).\]
On foundations, these are:

\adjustbox{scale = .8, center}{
	\begin{tikzcd}
		{(E,F,G,H)} \arrow[d] \arrow[r]                & {(F,G,L_H(E(4)),H)} \arrow[d] &  & {(E,F,G,H)} \arrow[d] \arrow[r]               & {(E,R_E(H(-4)),F,G)} \arrow[d] \\
		{(F,R_FE, G,H)} \arrow[r, equals] & {(F,L_G(L_H(E(4))),G,H)}                   &  & {(E,F,L_GH,G)} \arrow[r, equals] & {(E,F,R_F(R_E(H(-4))),G)}                  
	\end{tikzcd}
}

When we have an instance of the helix relation of the form $\gamma_1 = \gamma_2 \circ \gamma_3$, we will often refer to these as a \textit{helix triangle}. See Figure \ref{fig:some-adm-muts-2} for an illustration.

The braid group naturally acts on the set of full exceptional collections via mutations. This action gives a way to identify new sets of mutations which produce the same helix, but we will not use it in the sequel. Instead, we will use the helix relation to produce new mutations.

\subsection{Exceptional bundles and stability on $\PP^3$} There are several natural ways to try to extend the classification of stable Chern characters on $\PP^2$ in terms of exceptional bundles to $\PP^3$. On $\PP^2$, when $F$ is a stable bundle of slope $\mu$ and $E_{\alpha}$ is an exceptional bundle of slope $\alpha$ with $0 < \alpha - \mu < 3$, then by stability $\Hom(F, E_{\alpha}) = 0$ and by Serre duality $\Ext^2(F, E_{\alpha}) = 0$, so $\chi(F, E_{\alpha}) < 0$; applying this bound, and the entirely similar bound obtained from exceptional slopes $\alpha$ with $0 < \mu - \alpha < 3$, for each exceptional bundle produces precisely the condition that $\Delta(F) \geq \delta(\mu)$ from Theorem \ref{P2class}. 

If $F$ is a stable bundle on $\mathbb{P}^3$ of slope $\mu$ and $E_{\alpha}$ is a constructive exceptional bundle of slope $\alpha$ with $0 < \mu - \alpha < 4$, then, again by stability and Serre duality, 
	\[\hom(F, E_{\alpha}) = 0 \quad \text{and} \quad  \ext^3(F, E_{\alpha}) = \hom(E_{\alpha}, F(-4)) = 0.\]
If $\Ext^2(F, E_{\alpha}) = 0$, then we would deduce an inequality $\chi(F,E_{\alpha}) < 0.$ 

If similarly we have $0 < \alpha - \mu < 4$ and a vanishing $\Ext^2(E_{\alpha}, F) = 0$, then we would also conclude that $\chi(E_{\alpha}, F) < 0$, and these inequalityies together would produce a lower bound on $\Delta(F)$ analogous to that on $\PP^2$. Unfortunately, the following example, which applies recent work of Coskun-Huizenga-Smith \cite{CHS}, shows that this $\Ext^2$ vanishing property does not hold in general, so this procedure does not produce the classification of stable Chern characters on $\PP^3$.

\begin{example}
Consider a bundle $V$ on $\PP^3$ which is defined via an exact sequence
	\[0 \rightarrow \OO_{\mathbb{P}^3}(-4)^8 \stackrel{M}{\rightarrow} \OO_{\mathbb{P}^3}(-3)^{11} \rightarrow V \rightarrow 0,\]
where $M$ is a general matrix of linear forms. Then \cite[Theorem 5.1]{CHS} implies that $V$ is slope-stable. In particular, since $h^i(\OO_{\mathbb{P}^3}(-3)) = 0$ for all $i$ and $h^3(\OO_{\mathbb{P}^3}(-4)) = 1$ with $h^i(\OO_{\mathbb{P}^3}(-4)) = 0$ otherwise, we have $H^2(V) = \Ext^2(\OO_{\mathbb{P}^3}, V) = 8$, which contradicts the above vanishing statement. The discriminant $\Delta(V)$ does not lie above the lower bounds predicted above.
\end{example}

In other words, exceptional bundles on $\mathbb{P}^3$ do not determine the sharp Bogomolov-Gieseker inequality, unlike $\mathbb{P}^2$. The sharp Bogomolov-Gieseker inequality on $\PP^3$ remains an interesting and difficult area of research (see, e.g., \cite{Sch}).

\section{Graphs associated to helices on $\PP^3$} \label{graphs-section} It is convenient and natural to associate a graph $\text{H}_{\PP^3}$ to the set of constructive exceptional helices, whose vertices correspond to the helices and whose edges correspond to mutations. In fact we will focus on a sequence of subgraphs of $\text{H}_{\PP^3}$ which are easier to understand. Each graph will be defined inductively.

See Example \ref{explicit-muts} for a concrete example of the following abstract definitions. In this and the following sections, we will only be concerned about bundles on $\mathbb{P}^3$, so we will often abbreviate $\mathcal{O}_{\mathbb{P}^3}$ to $\mathcal{O}$ to unburden the notation.

\begin{definition}
	We define the graph $\text{H}_{\PP^3}$ as follows. In step 0, we add a vertex corresponding to the standard helix $\sigma_0$. In step 1, we add vertices for each of its 8 mutations, connected to $\sigma_0$ via an edge. For a vertex $v$ corresponding to a helix $\tau$ added in step $n$, in step $n+1$ we add a vertex for each mutation of $\tau$ which does not already appear in a previous step, and connect each of the mutations of $\tau$ to $\tau$ by an edge. We set $\text{H}_{\PP^3}$ to be the infinite graph constructed in this way.
	
	There are precisely 8 distinct mutations of any helix on $\PP^3$, so each vertex of $\text{H}_{\PP^3}$ has degree 8. Because mutations are invertible, we do not direct the edges of $\text{H}_{\PP^3}$.
\end{definition}

We will define a subgraph $\Gamma_{\PP^3} \subseteq \text{H}_{\PP^3}$ whose inductive definition is determined by helix theory on $\PP^3$. The graph $\text{H}_{\PP^3}$ is rather complicated, but we will show that the subgraph $\Gamma_{\PP^3}$ (and a subgraph $\Gamma_{\PP^3}' \subseteq \Gamma_{\PP^3}$) contain all of the information relevant for our purposes, and are much simpler. Both $\Gamma_{\mathbb{P}^3}$ and $\Gamma_{\mathbb{P}^3}'$ are trees (see Proposition \ref{gamma-tree}), and every constructive exceptional bundle appears in a helix in $\Gamma_{\PP^3}$ (see Proposition \ref{adm-replacement}).

We first establish some terminology. Fix a pair of helices and a mutation $\gamma: \sigma \mapsto \tau$ between them. In terms  of this mutation, the 8 mutations of $\tau$ can be further isolated. 

\begin{definition} \label{admissible}
	We partition the mutations described in Lemma \ref{mutations} in terms of the mutation $\gamma$ as follows. Suppose on a foundation $(E,F,G,H)$ for $\sigma$ the mutation $\gamma$ is defined on the pair $(E,F)$; the definitions differ for left and right mutations. We set:
	
	\begin{center}
		$\begin{array}{c}
			\gamma = R(E,F) \\
			\Rightarrow \tau = (F, R_F E, G, H):		
		\end{array} $
		\qquad 
		\begin{tabular}{c | c | c }
			$\gamma$-commuting & $\gamma$-admissible & $\gamma$-extraneous \\ \hline
			$R(G,H)$, & $R(F, R_F E)$,  & $L(F, R_F E)$,  \\
			$L(G,H)$ & $L(R_F E, G),$ & $R(G, R_F E)$, \\
			&  $R(H(-4), F)$ & $L(H, F(4))$
		\end{tabular}
	\end{center}
	and
	\begin{center}
		$\begin{array}{c}
			\gamma = L(E,F)  \\
			\Rightarrow \tau = (L_E F, E, G, H):		
		\end{array} $
		\begin{tabular}{c | c | c }
			$\gamma$-commuting & $\gamma$-admissible & $\gamma$-extraneous \\ \hline
			$R(G,H)$, & $L(L_E F, E)$ & $R(L_E F, E)$, \\
			$L(G,H)$ & $R(H(-4), L_F E)$, &  $L(H(-4), L_E F)$, \\
			& $L(E,G)$ & $R(E,G)$
		\end{tabular}
	\end{center}
If the mutation $\gamma$ is clear from the context, we will often shorten ``$\gamma$-admissible'' to simply ``admissible'', and we will often also speak of ``admissible helices'', which are those obtained from a fixed helix via mutations which are each admissible with respect to the previous one.
\end{definition}

\begin{remark} \label{commuting-remark}
	Definition \ref{admissible} is meant to distinguish a class of mutations, the $\gamma$-admissible mutations, which form bundles with slope close to the slope of the bundle produced by $\gamma$, when considered on a preferred foundation. In slogan form, the $\gamma$-commuting mutations produce new bundles inefficiently, and the $\gamma$-extraneous mutations either produce new bundles inefficiently or produce bundles with slope more than 4 from the bundle produced by $\gamma$; these bundles are more efficiently produced by admissible mutations in other slope ranges. See Proposition \ref{adm-replacement} for precise statements to this effect.
	
	Observe that if $\gamma: \sigma \mapsto \tau$ is a mutation, then $\delta: \tau \mapsto \tau'$ being $\gamma$-commuting means that there is a commutative square of mutations
	\begin{center}
		\begin{tikzcd}
			\tau \arrow[r, "\delta"] & \tau' \\ 
			\sigma \arrow[u, "\gamma"] \arrow[r] & \sigma' \arrow[u]
		\end{tikzcd}
	\end{center}
	since (in the notation of the definition) the pair $(G,H)$ appears in $\sigma$ and $\tau$.
	
	The $\gamma$-extraneous mutations can be described as follows. The first listed mutation in either case is the inverse of the mutation $\gamma$; the second listed mutation is the mutation in a helix triangle following $\gamma$; the third produces a bundle $L_H(F(4))$ or $R_GE$, whose slope differs by more than 4 from the slope of the new bundle introduced by $\gamma$.
\end{remark}

\begin{example} \label{adm-ex}
	We give descriptions of admissible mutations which will be important in later arguments (\textit{cf.} Proposition \ref{adm-replacement}). See Example \ref{explicit-muts} below for a concrete example. 
	
	Suppose that $\gamma: \sigma \mapsto \sigma_1$ is a mutation, and $\gamma':  \sigma_1 \mapsto \tau_1$ is $\gamma$-commuting. Then there is another $\gamma$-commuting mutation $\sigma_1 \mapsto \tau_1'$ of $\sigma_1$, which mutates the same pair in the other direction. We obtain squares as in Remark \ref{commuting-remark}. Then we can apply $\gamma$ to $\sigma_1$ as well, by mutating the new pair in the same way (e.g., $\gamma$ replaces $(E,F)$ by $(F, R_FE)$ and then replaces this by $(R_FE, R_{R_FE}F)$). See Figure \ref{fig:some-adm-muts-1}. Adding in the helix triangles that the mutations of $\sigma_1$ fit into, we obtain the additions in Figure \ref{fig:some-adm-muts-2}.
	
	This diagram shows all of the $\gamma$-admissible mutations. First, $\gamma$ applied to $\sigma_1$ is admissible, and is of the first kind in the definition of a $\gamma$-admissible mutation (in both the left and right mutation cases). The mutations $\sigma_1 \mapsto \epsilon_1, \epsilon_2$ are the other $\gamma$-admissible mutations. To see this, consider the following example (the others are similar): if $\gamma$ on foundations is the right mutation $(E,F,G,H) \mapsto (F, R_FE, G, H)$, and the commuting mutation $\gamma'$ does $(F, R_FE, G, H) \mapsto (F, R_FE, L_GH, G)$ on this foundation, then, in light of Section \ref{helix}, the mutation to $\epsilon_1$ is $(F, R_FE, G, H) \mapsto (F,R_F(H(-4)), R_FE, G)$, which is the third kind of $\gamma$-admissible mutation in the right mutation case in Definition \ref{admissible}.
	
	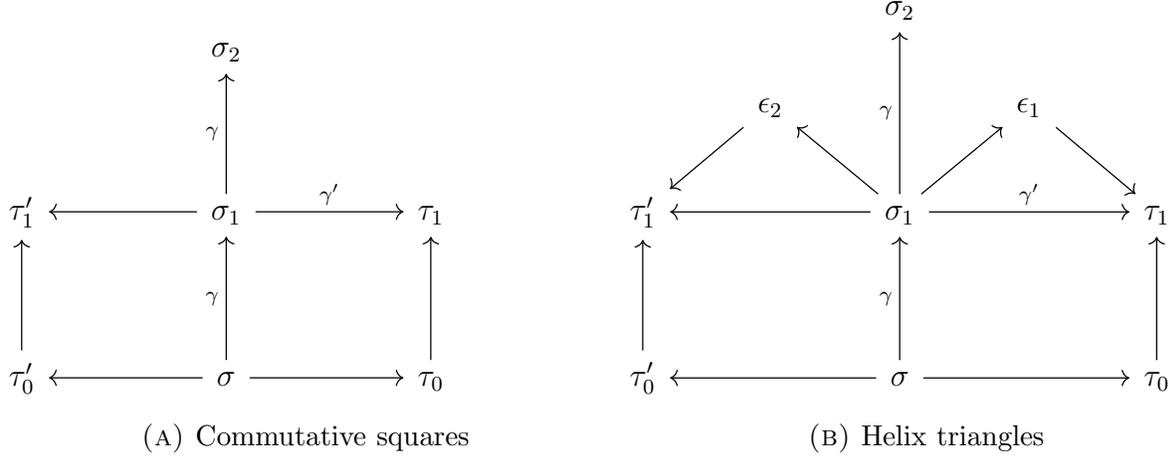
\begin{figure}
		\begin{subfigure}{.5\textwidth}
			\begin{tikzcd}
				&  & \sigma_2                                                       &  &                   \\
				&  &                                                                &  &                   \\
				\tau_1'            &  & \sigma_1 \arrow[rr, "\gamma'"] \arrow[uu, "\gamma"] \arrow[ll] &  & \tau_1            \\
				&  &                                                                &  &                   \\
				\tau_0' \arrow[uu] &  & \sigma \arrow[uu, "\gamma"] \arrow[rr] \arrow[ll]            &  & \tau_0 \arrow[uu]
			\end{tikzcd}
			\caption{Commutative squares}
			\label{fig:some-adm-muts-1}
		\end{subfigure}%
		\begin{subfigure}{.5\textwidth}
			\begin{tikzcd}
				&                       & \sigma_2                                                                             &                       &                   \\
				& \epsilon_2 \arrow[ld] &                                                                                      & \epsilon_1 \arrow[rd] &                   \\
				\tau_1'            &                       & \sigma_1 \arrow[rr, "\gamma'"] \arrow[uu, "\gamma"] \arrow[ll] \arrow[ru] \arrow[lu] &                       & \tau_1            \\
				&                       &                                                                                      &                       &                   \\
				\tau_0' \arrow[uu] &                       & \sigma \arrow[uu, "\gamma"] \arrow[rr] \arrow[ll]                                  &                       & \tau_0 \arrow[uu]
			\end{tikzcd}
			\caption{Helix triangles}
			\label{fig:some-adm-muts-2}
		\end{subfigure}
		\caption{Mutations for Example \ref{adm-ex}.}
		\label{fig:some-adm-muts}
	\end{figure}
\end{example}

\begin{example} \label{explicit-muts}
	Consider the mutation $\gamma: \sigma_0 = (\mathcal{O}(i))_{i \in \mathbb{Z}} \mapsto \sigma_1$ which produces the tangent bundle $T_{\mathbb{P}^3}$. Specifically, $\gamma$ is defined by the Euler sequence \ref{euler}, and $\sigma_1$ has foundation $(\mathcal{O}(1), T_{\mathbb{P}^3}, \mathcal{O}(2), \mathcal{O}(3))$. We sketch some $\gamma$-commuting and $\gamma$-admissible mutations of $\sigma$, demonstrating some of the diagram in Figure \ref{fig:some-adm-muts-2} in this case.
	
	First, the right mutation of the pair $(\mathcal{O}_{\mathbb{P}^3}(1), T_{\mathbb{P}^3})$ is $\gamma$-admissible, and is defined by a short exact sequence
		\[0 \rightarrow \mathcal{O}_{\mathbb{P}^3}(1) \rightarrow T_{\mathbb{P}^3}^4 \rightarrow E_{15/11} \rightarrow 0,\]
	where the exceptional bundle $E_{\alpha}$ has slope $\alpha$. In the notation of Definition \ref{admissible}, this is $R(F, R_FE)$. In the notation of Figure \ref{fig:some-adm-muts}, this mutation forms the helix $\sigma_2$.
	
	The left mutation of the pair $(T_{\mathbb{P}^3}, \mathcal{O}_{\mathbb{P}^3}(2))$ is also $\gamma$-admissible, and is defined by a short exact sequence
		\[0 \rightarrow E_{22/17} \rightarrow T_{\mathbb{P}^3}^6 \rightarrow \mathcal{O}_{\mathbb{P}^3}(2) \rightarrow 0.\]
	In the notation of Definition \ref{admissible}, this is $L(R_FE, G)$. We will fit this in to Figure \ref{fig:some-adm-muts-2} as $\epsilon_1$ (note that there is some choice in drawing the diagram, so we could have also declared this helix to be $\epsilon_2$). The helix $\epsilon_1$ has foundation $(\mathcal{O}_{\mathbb{P}^3}(1), E_{22/17}, T_{\mathbb{P}^3}, \mathcal{O}_{\mathbb{P}^3}(3))$. 
	
	Note that the mutation $\epsilon_1$ fits into a helix triangle with a $\gamma$-commuting mutation $\gamma'$, called $\tau_1$ in Figure \ref{fig:some-adm-muts-2}. The mutation $\epsilon_1 \mapsto \tau_1$ is given by 
		\[(\mathcal{O}_{\mathbb{P}^3}(1), E_{22/17}, T_{\mathbb{P}^3}, \mathcal{O}_{\mathbb{P}^3}(3)) \mapsto (L_{\mathcal{O}_{\mathbb{P}^3}(1)}E_{22/17}, \mathcal{O}_{\mathbb{P}^3}(1), T_{\mathbb{P}^3}, \mathcal{O}_{\mathbb{P}^3}(3)).\]
	The helix $\tau_1$ obtained this way also has foundation $(\mathcal{O}_{\mathbb{P}^3}(1), T_{\mathbb{P}^3}, \mathcal{O}_{\mathbb{P}^3}(3), (L_{\mathcal{O}_{\mathbb{P}^3}(1)}E_{22/17})(4))$, and the helix condition implies that
		\[(L_{\mathcal{O}_{\mathbb{P}^3}(1)}E_{22/17})(4) \simeq R_{\mathcal{O}_{\mathbb{P}^3}(3)}\mathcal{O}_{\mathbb{P}^3}(2) = T_{\mathbb{P}^3}(2),\]
	and that the mutation $\gamma'$ (in the notation of Figure \ref{fig:some-adm-muts-2}) is the $\gamma$-commuting mutation $R(\mathcal{O}_{\mathbb{P}^3}(2), \mathcal{O}_{\mathbb{P}^3}(3))$.
\end{example}

We now define the subgraphs of the graph of constructive helices $\text{H}_{\PP^3}$. Denote the 8 mutations of the standard helix $\sigma_0$ by $\gamma_i: \sigma_0 \mapsto \sigma_i$, $1 \leq i \leq 8$ (the ordering will not be important for us). These mutations introduce the tangent bundle $T_{\PP^3}$ and cotangent bundle $T_{\PP^3}^{\vee}$ and their twists; see Example \ref{euler}.

\begin{definition} \label{graphs-def}
	We define $\Gamma_{\PP^3}$ inductively by first including the vertex corresponding to $\sigma_0$ and each $\sigma_i$ and an edge corresponding to these mutations. Then for any edge in $\Gamma_{\PP^3}$ corresponding to a mutation $\gamma: \sigma \mapsto \tau$, we add a vertex for each $\gamma$-admissible mutation of $\tau$, each of which is attached to $\tau$ by an edge. 
	
	To define $\Gamma_{\PP^3}'$, we work on explicit foundations for all helices involved. We give $\sigma_0$ the preferred foundation $(\OO(-1), \OO, \OO(1), \OO(2))$, and define all mutations in terms of this foundation. We set $\Gamma_{\PP^3}'$ to be the subgraph of $\Gamma_{\PP^3}$ consisting of helices obtained from $\sigma_0$ via admissible mutations that produce bundles with slope $0 < \mu < 1$ from the preferred foundation of $\sigma_0$.
	
	Both $\Gamma_{\PP^3}$ and its subgraph $\Gamma_{\PP^3}'$ were defined by iterating the process of mutation. For each $k \geq 0$, we set $(\Gamma_{\PP^3})_k$ to be the subgraph of $\Gamma_{\PP^3}$ obtained by the first $k$ mutation steps. For example. $(\Gamma_{\PP^3})_0$ is the singleton vertex corresponding to $\sigma_0$, and $(\Gamma_{\PP^3})_1$ is the graph with nine vertices, one corresponding to $\sigma_0$ which is connected to each of the other eight vertices by a single edge, and no other edges. We set $(\Gamma_{\PP^3}')_k$ to be the intersection $\Gamma_{\PP^3}' \cap (\Gamma_{\PP^3})_k$.
\end{definition}

\begin{remark}
	The graph of all helices and all mutations on $\mathbb{P}^2$ is a tree of degree 3, called $\Gamma_{\PP^2}$ in \cite{BG}.
\end{remark}

\subsection{Admissible replacements} In this subsection we show how to replace arbitrary helices by admissible ones; the main result is Proposition \ref{adm-replacement}. The following construction will be used in the proof of Proposition \ref{adm-replacement}.

\begin{lemma} \label{admissible-lemma}
	Let $\gamma_1: \sigma_1 \mapsto \sigma_2$ be a mutation, and $\gamma_2: \sigma_2 \mapsto \sigma_3$ be the reapplication of $\gamma_1$ to $\sigma_2$. As in Figure \ref{fig:lemma-mutations}, let the helix $\tau_1$ fit into a helix triangle with $\gamma_1$, and $\tau_2$ fit into a helix triangle with $\gamma_2$. 
	
	Then the mutations $\sigma_2 \mapsto \tau_1, \tau_2$ fit into a commuting square with a third helix which we call $\tau_3$, as in Figure \ref{fig:lemma-mutations}. Moreover, the mutation $\tau_1 \mapsto \tau_3$ is admissible with respect to the mutation $\sigma_1 \mapsto \tau_1$. 
\end{lemma}

\begin{figure}[!htb]
	\begin{tikzcd}
		&                                      & \tau_3                                     &                                      &          \\
		& \tau_1 \arrow[rd] \arrow[ru, dotted] &                                            & \tau_2 \arrow[rd] \arrow[lu, dotted] &          \\
		\sigma_1 \arrow[rr, "\gamma_1"] \arrow[ru] &                                      & \sigma_2 \arrow[rr, "\gamma_2"] \arrow[ru] &                                      & \sigma_3
	\end{tikzcd}
	\caption{Mutations for Lemma \ref{admissible-lemma}.}
	\label{fig:lemma-mutations}
\end{figure}
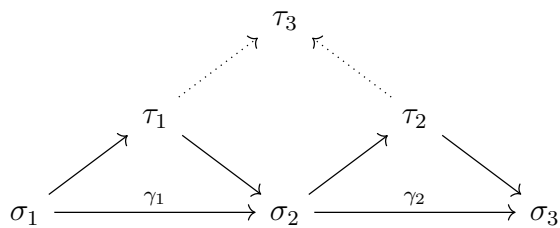

\begin{proof}
	To check the lemma we choose foundations for the helix $\sigma_1$, and make a choice of whether the mutation $\gamma_1$ is a right or left mutation; it will be clear that the proof does not depend on these choices, in the sense that identical arguments can be used when $\gamma_1$ is a left mutation. 
	
	Let $(A,B,C,D)$ be a foundation for $\sigma_1$, so that $\gamma_1: \sigma_1 \mapsto \sigma_2$ is the right mutation $(A,B) \mapsto (B, R_BA)$. Then a foundation for $\sigma_2$ is $(B, R_BA, C, D)$, and a foundation for $\sigma_3$ is $(R_BA, R_{R_BA}B, C, D)$. 
	
	By construction, we have:
	
	\[\text{the helix } \tau_1 \text{ has foundation } (B, C, L_D(A(4)), D)\] 
	
	and 
	
	\[\text{the helix } \tau_2 \text{ has foundation } (R_BA, C, L_D(B(4)), D).\]
	
	 The mutation $\sigma_2 \mapsto \tau_1$ is the right mutation of the pair $(R_BA, C)$, and the mutation $\sigma_2 \mapsto \tau_2$ is the left mutation in the pair $(D, B(4))$. These are disjoint pairs in a foundation for $\sigma_2$, so these mutations fit into a commuting square with one another. We call the fourth helix in this commuting square $\tau_3$.
	
	Checking that the mutation $\tau_1 \mapsto \tau_3$ is admissible with respect to the mutation $\sigma_1 \mapsto \tau_1$ can also be done on foundations. It is convenient to use the foundation $(B,C,D,A(4))$ for $\sigma_1$. With respect to this foundation, the mutation $\sigma_1 \mapsto \tau_1$ is the left mutation in $(D, A(4))$, and $\tau_1$ has foundation $(B,C,L_D(A(4)), D)$. The mutation $\tau_1 \mapsto \tau_3$ is in the pair $(D, B(4))$ (because the mutation $\sigma_2 \mapsto \tau_2$ on the other side of the square was in this pair), and this mutation is admissible; see Definition \ref{admissible} (left mutation case, third entry for admissible mutations).
\end{proof}

\begin{definition} \label{par-mut-def}
	Throughout, we fix a mutation $\gamma: \sigma \mapsto \tau$. 
	
	Let $\gamma': \tau \mapsto \epsilon$ be a mutation of the helix $\tau$, and let $\tau \mapsto \tau'$ be a $\gamma$-commuting mutation. The mutations of $\tau$ fit into commutative squares with other mutations, which we call \textit{parallell mutations}.
	
	Below, we define these mutations of $\tau$ on foundations, and plot them along with some other mutations. The diagrams differ by what kind of mutation $\gamma'$ is. In blue, we plot the parallel mutation $\gamma''$ to $\gamma'$, as well as paths of mutations beginning in $\gamma$ and ending in the parallel mutation.
	
	\begin{enumerate}
		\item 
		$\begin{array}{l} \gamma'\tau = \gamma^2\sigma: \\ \gamma': (F, R_FE, G, H) \mapsto (R_FE, R_{R_FE}F, G, H)) \\ \gamma'': (F, R_FE, L_GH, G) \mapsto (R_FE, R_{R_FE}F, L_GH, G)  \end{array}$ \quad 
		\adjustbox{scale=.7}{
			\begin{tikzcd}
				\epsilon \arrow[rr]          &  & \epsilon''         \\
				&  &                    \\
				\tau \arrow[rr] \arrow[uu, "\gamma'"] &  & \tau' \arrow[uu, blue, "\gamma''"] \\
				&  &                    \\
				\sigma \arrow[rr] \arrow[uu, "\gamma"]   &  & \sigma' \arrow[uu]  
		\end{tikzcd}} \\
		
		\item $\begin{array}{l}
			\gamma' \text{ fits in a triangle with } \tau \mapsto \tau': \\
			\gamma': (F, R_FE, G, H) \mapsto (F, R_F(H(-4)), R_FE, G) \\ \gamma'': (F, R_FE, L_GH, G) \mapsto (F, R_F(G(-4)), R_FE, L_GH)
		\end{array}$
		\adjustbox{scale=.7}{
			\begin{tikzcd}
				&                     & \epsilon''                                      &                                         &       \\
				& \epsilon \arrow[ru] &                                                  & \epsilon' \arrow[rd, dotted] \arrow[lu] &       \\
				\tau \arrow[rr] \arrow[ru, "\gamma'"] &                     & \tau' \arrow[ru, blue, "\gamma''"] \arrow[rr, dotted] \arrow[lu] &                                         & \cdot \\
				&                     &                                                  &                                         &       \\
				\sigma \arrow[rr] \arrow[uu, "\gamma"]   &                     & \sigma' \arrow[uu]                                 		&                                         &      
		\end{tikzcd}} \\
		
		\item $\begin{array}{l} \gamma' \text{ fits in a triangle with } \gamma \text{ applied to } \tau: \\ \gamma': (F, R_FE, G, H) \mapsto (R_FE, G, L_H(F(4)), H) \\ \gamma'': (E,F,G,H) \mapsto (F, G, L_H(E(4)), H)  \end{array}$ \qquad \quad
		\adjustbox{scale=.7}{
			\begin{tikzcd}
				\cdot                                           &                                       &                  \\
				& \epsilon \arrow[lu, dotted] \arrow[r] & \epsilon''       \\
				\tau \arrow[rr] \arrow[uu, dotted] \arrow[ru, "\gamma'"] &                                       & \tau'          \\
				& \epsilon' \arrow[lu] \arrow[ruu]      &                  \\
				\sigma \arrow[rr] \arrow[uu, "\gamma"] \arrow[ru, blue, "\gamma''"]           &                                       & \sigma' \arrow[uu]
		\end{tikzcd}}
	\end{enumerate}
	
	There are three figures. In each, the mutation $\gamma: \sigma \mapsto \tau$ fits into a square with another mutation $\sigma \mapsto \sigma'$. In our descriptions of the parallel mutations on foundations, we need to make a choice of whether $\sigma \mapsto \sigma'$ is a right or left mutation; above we have chosen left. If $\sigma \mapsto \sigma'$ is a right mutation, the diagrams look entirely similar, and the descriptions of the parallel mutations on foundations are analogous to those given above. These describe 6 of the 8 total mutations of $\tau$.
	
	The remaining two mutations are not assigned parallel mutations. These are (i) the inverse mutation $\gamma^{-1}: \tau \mapsto \sigma$ of $\gamma$, and (ii) the mutation of $\tau$ which fits into a helix triangle with $\gamma$ (in (3) above, this is the mutation $\tau \mapsto \epsilon'$).
\end{definition}

In the diagrams, all squares are commutative and all triangles are helix triangles, with dots in place of helices which are unimportant but illustrative, and dotted arrows marking mutations which connect these to the relevant helices. 

\begin{remark} \label{par-mut-rem}
	The parallel mutation $\gamma''$ of a mutation $\gamma'$ produce new target helices $\epsilon''$. In all cases the helix $\epsilon''$ contains the bundle produced by the mutation $\gamma'$. 
	
	The definition is formed so as to be suitable for inductive arguments. In Proposition \ref{adm-replacement}, we will show that sequence of mutations $\sigma_0 \mapsto \sigma_1 \mapsto \cdots \mapsto \sigma$ which produces an exceptional bundle $E$ in the final step, can be replaced by a sequence $\sigma_0 \mapsto \sigma_1' \mapsto \cdots \sigma'$ which also produces $E$, with the property that each mutation $\sigma_{k-1}' \mapsto \sigma_k'$ is admissible with respect to the prior mutation in the new sequence. The parallel mutations to the mutations $\sigma_{k-1} \mapsto \sigma_k$ will be the central tool for forming these replacements.
	
	See Example \ref{adm-rep-ex} for an example of a parallel sequence.
\end{remark}

Because parallel mutations form squares in which the original mutation sits, we can iterate the process of forming parallel mutations. 

\begin{definition} \label{par-seq-def}
	If $\gamma: \sigma \mapsto \tau$ is a mutation, $\gamma': \sigma' \mapsto \tau'$ is a parallel mutation, and $\tau \mapsto \tau_1 \mapsto \cdots \mapsto \tau_k$ is a sequence of mutations, then the sequence $\tau' \mapsto \tau_1' \mapsto \cdots \mapsto \tau'_k$ obtained by taking parallel mutations at each step is called the \textit{parallel sequence} to the given sequence.
\end{definition}

We will use the notion of parallel mutations to replace arbitrary paths on the graph $\text{H}_{\mathbb{P}^3}$ with admissible paths.

\begin{proposition} \label{adm-replacement}
	Let $E$ be a constructive exceptional bundle on $\mathbb{P}^3$, and let $\sigma$ be a constructive helix containing $E$, with a sequence of mutations $\sigma_0 \mapsto \sigma_1 \mapsto \cdots \mapsto \sigma_n = \sigma$ realizing constructivity of $\sigma$ of minimal length among helices containing $E$. 
	
	Then there is another constructive helix $\sigma'$ also containing $E$, with a sequence of admissible mutations $\sigma_0 \mapsto \sigma_1' \mapsto \cdots \mapsto \sigma_{n'}' = \sigma'$. 
\end{proposition}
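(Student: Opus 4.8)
The plan is to rewrite the given path into an admissible one by repeatedly removing a single non-admissible mutation, inducting on the number of mutations in the path that fail to be admissible with respect to their predecessor. The first mutation of any path is one of the eight mutations of $\sigma_0$ and lies in $\Gamma_{\PP^3}$ by construction, so I count it as admissible; for each later mutation I use its predecessor $\gamma$ to classify it as $\gamma$-commuting, $\gamma$-admissible, or $\gamma$-extraneous (Definition \ref{admissible}). If no mutation is non-admissible the path already lies in $\Gamma_{\PP^3}$; otherwise I single out the last non-admissible mutation $\gamma'$, so that every mutation after it is already admissible, and I modify the path in a neighborhood of $\gamma'$ to eliminate it while keeping $E$ in the terminal helix.

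Minimality disposes of the extraneous mutation of the first kind immediately: this mutation is the inverse of its predecessor, so its occurrence would allow me to cancel two consecutive steps and produce a strictly shorter path to $\sigma_n$, which still contains $E$, contradicting the assumed minimality. A $\gamma$-commuting mutation is removed using the commutative square of Remark \ref{commuting-remark}: the pair it mutates is already present before $\gamma$, so I may perform it first and then $\gamma$, leaving the terminal helix of these two steps---and therefore the entire suffix together with $E$---unchanged, and iterating this pushes the commuting mutation to the front, where it becomes a mutation of $\sigma_0$ and is admissible.

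This leaves the extraneous mutations of the remaining two kinds, which I replace using the parallel-mutation constructions of Definition \ref{par-mut-def}. Each such $\gamma'$ fits into a helix triangle with $\gamma$, or with $\gamma$ applied to $\tau$, abutting a commutative square whose opposite edge is the parallel mutation $\gamma''$. By Remark \ref{par-mut-rem} the relevant parallel mutation is admissible with respect to an earlier mutation and produces exactly the bundles produced by $\gamma'$, and the admissible mutations of Example \ref{adm-ex} realize the crossing of the square. I then transport the already admissible tail of the path as the parallel sequence of Definition \ref{par-seq-def}; since each parallel mutation produces the same bundles as the mutation it replaces, the terminal helix of the transported tail still contains $E$.

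The main obstacle is termination and the attendant bookkeeping. Both reordering a commuting mutation and transporting a suffix as a parallel sequence change which mutation precedes a later step, so the partition of Definition \ref{admissible} that governs admissibility can shift; I must verify that transporting an already admissible tail across the square yields an admissible tail, so that the replacement strictly lowers the number of non-admissible mutations rather than merely relocating them. Establishing this---together with a well-founded complexity measure that decreases under each rewriting step while $E$ persists in the terminal helix, even though the length of the path may grow---is the crux of the argument.
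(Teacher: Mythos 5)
Your strategy has the same skeleton as the paper's proof---use minimality to dispose of certain extraneous mutations, reorder commuting mutations through their commutative squares, and replace what remains by parallel mutations while transporting the rest of the path as a parallel sequence (Definition \ref{par-seq-def}). But the proposal has a genuine gap, and you name it yourself: you never establish that the rewriting terminates, i.e., that transporting an admissible tail across a square keeps it admissible and that some well-founded measure strictly decreases. Declaring this ``the crux'' and leaving it open means the proof is missing precisely where the content lies. The paper closes this gap by processing the \emph{first} non-admissible mutation $\gamma_k$ rather than the last: after the commuting reductions, $\gamma_k$ must fit in a helix triangle with the re-application of $\gamma_{k-1}$, its parallel mutation $\gamma_{k-1}'$ fits in a triangle with $\gamma_{k-1}$, and the other side of the resulting square is admissible with respect to $\gamma_{k-1}'$ (Remark \ref{par-mut-rem}); this pattern is pushed down until one reaches $\sigma_0$, producing an admissible initial segment, and the remainder of the path is carried along as a parallel sequence. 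Since parallel sequences have the same length as the sequences they replace, each pass strictly reduces the number of non-admissible mutations, so the process stops after finitely many passes. Working from the last non-admissible mutation, as you do, makes exactly this count hard to control, because each replacement occurs upstream of the already-rewritten tail and can disturb its admissibility.

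Two smaller points. First, you propose to remove the extraneous mutation of the second kind (the one completing a helix triangle with its predecessor $\gamma$) using the parallel-mutation constructions, but Definition \ref{par-mut-def} provides no parallel mutation for that case: its three cases are the re-application of $\gamma$, the triangle with a commuting mutation, and the triangle with the re-application of $\gamma$. The correct disposal is the same minimality argument you used for inverses---a helix triangle expresses the composite of two consecutive mutations as a single mutation, so its occurrence would yield a strictly shorter path, a contradiction; this is exactly how the paper rules it out. Second, your claim that iterated reordering pushes a commuting mutation ``to the front, where it becomes a mutation of $\sigma_0$'' is unjustified: the mutation only moves forward while it remains commuting with respect to each successive predecessor, and once it ceases to commute it must be re-examined as an admissible or extraneous mutation of that predecessor, which is how the paper's iteration is actually structured.
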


\begin{proof}
	Write $\gamma_i: \sigma_{i-1} \mapsto \sigma_i$ for the mutations in the given sequence, and write $\gamma = \gamma_1 \circ \cdots \circ \gamma_N$ for the total mutation. Let $k = k_{\gamma}$ be the smallest index such that $\gamma_{N-k}$ is not admissible with respect to $\gamma_{N-k-1}$. 
	
	
	Note that $\gamma_{N-k}$ is either $\gamma_{N-k-1}$-commuting or fits into a helix triangle with the reapplication of $\gamma_{N-k-1}$ to $\sigma_{N-k-1}$; this is because the other non-admissible mutations of $\sigma_{N-k-1}$ don't minimize the total length of $\gamma$. 
	
	
	We will replace $\gamma = \gamma_1 \circ \cdots \circ \gamma_N$ by a sequence 
		\[\gamma' = \gamma_1 \circ \cdots \circ \gamma_{N-k-2} \circ \gamma_{N-k-1}' \circ \gamma_{N-k}' \circ \cdots \circ \gamma_N'\]
	of mutations of the same length, such that the last mutation $\gamma_N'$ still produces a helix containing $E$, and we have $k_{\gamma'} > k_{\gamma} = k$. 
	
	To do so, we induct on $k$. The base case differs based on the type of non-admissible mutation which occurs. If the non-admissible mutation $\gamma_{N-k}$ is commuting, then the base case is where $k = 1$ since if $k = 0$, then the final mutation $\gamma_N$ is commuting with respect to $\gamma_{N-1}$, so the mutation along the other side of the square formed by $\gamma_{N-1}$ and $\gamma_N$ also produces $E$, so $\gamma$ is not of minimal length. If $\gamma_{N-k}$ is not commuting, then the base case is $k = 0$.
	
	Suppose that $\gamma_{N-k}$ is commuting, so that $k = 1$ and $\gamma_{N-1}$ is commuting with respect to $\gamma_{N-2}$, and $\gamma_N$ is admissible with respect to $\gamma_{N-1}$. There are three $\gamma_{N-1}$-admissible mutations, but we will check that only one of them can occur as $\gamma_N$. See Figure \ref{fig:B}. 
	
	\begin{figure}[!htb]
		\begin{tikzcd}
			&  & \sigma'''                                                       &                           \\
			&  &                                                                 & \tau_1 \arrow[lu, dotted] \\
			\sigma_{N-2} \arrow[rr, "\gamma_{N-1}"]     &  & \sigma_{N-1} \arrow[rd] \arrow[r] \arrow[ru] \arrow[uu, dotted] & \tau_2                    \\
			&  &                                                                 & \tau_3                    \\
			\sigma_{N-3} \arrow[uu, "\gamma_{N-2}"] \arrow[rr, "\gamma_{N-2}'"] &  & \sigma' \arrow[ru, "\gamma_{N-1}'", swap] \arrow[r] \arrow[uu]                         & \sigma''                 
		\end{tikzcd}
		\caption{The commuting base case for Proposition \ref{adm-replacement}.}
		\label{fig:B}
	\end{figure}
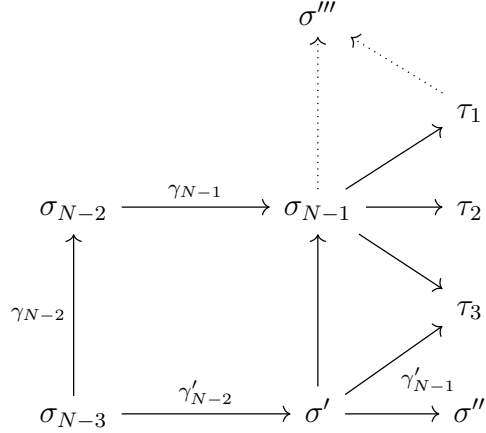

	In Figure \ref{fig:B}, the helices $\tau_1, \tau_2$, and $\tau_3$ are $\gamma_{N-1}$-admissible. The helix $\sigma'$ is the unique helix sitting in a commuting square with $\sigma_{N-3}$, $\sigma_{N-2}$, and $\sigma_{N-1}$. The helix $\sigma''$ is parallel to $\tau_2$, which is formed from $\sigma_{N-1}$ by reapplying $\gamma_{N-1}$. The helix $\tau_3$ fits into a helix triangle with $\sigma'$, and the helix $\tau_1$ fits into a helix triangle with the helix $\sigma'''$. 
	
	The helix $\sigma''$ contains the bundle produced by the mutation $\sigma_{N-1} \mapsto \tau_2$, so if the mutation $\gamma_N$ is the mutation $\sigma_{N-1} \mapsto \tau_2$, then $\gamma$ can be replaced by a shorter sequence of mutations ending with the mutations $\sigma_{N-3} \mapsto \sigma' \mapsto \sigma''$. Likewise, the mutation $\sigma_{N-1} \mapsto \tau_3$ can be replaced by the mutations $\sigma_{N-3} \mapsto \sigma' \mapsto \tau_3$, so in this case as well, $\gamma$ would not be of minimal length. We conclude that $\gamma_N$ is the mutation $\sigma_{N-1} \mapsto \tau_1$. It remains to replace $\gamma_{N-1} \circ \gamma_N$ by a sequence of admissible mutations. To do so, we extend Figure \ref{fig:B} in Figure \ref{fig:C}.
	
	
	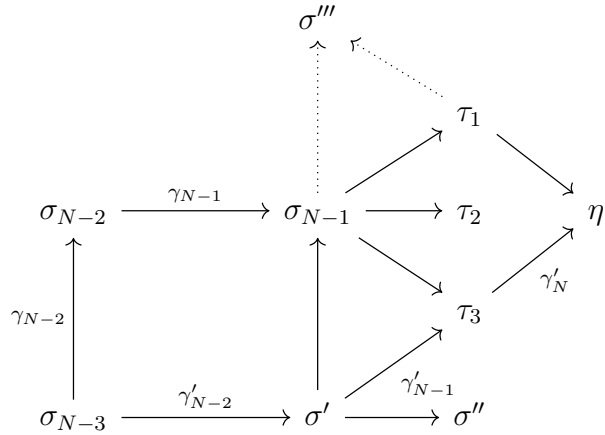
\begin{figure}[!htb]
		\begin{tikzcd}
			&  & \sigma'''                                                       &                                      &      \\
			&  &                                                                 & \tau_1 \arrow[lu, dotted] \arrow[rd] &      \\
			\sigma_{N-2} \arrow[rr, "\gamma_{N-1}"]     &  & \sigma_{N-1} \arrow[rd] \arrow[r] \arrow[ru] \arrow[uu, dotted] & \tau_2                               & \eta \\
			&  &                                                                 & \tau_3 \arrow[ru, "\gamma_N'", swap]                    &      \\
			\sigma_{N-3} \arrow[uu, "\gamma_{N-2}"] \arrow[rr, "\gamma_{N-2}'"] &  & \sigma' \arrow[ru, "\gamma_{N-1}'", swap] \arrow[r] \arrow[uu]                         & \sigma''                             &     
	\end{tikzcd}
		\caption{Extension of Figure \ref{fig:B} for Proposition \ref{adm-replacement}.}
		\label{fig:C}
	\end{figure}
	
	To define the helix $\eta$, we appeal to Lemma \ref{admissible-lemma}, which implies that the mutations $\sigma_{N-1} \mapsto \tau_1, \tau_3$ sit in a commuting square, and $\eta$ is defined to be the helix filling in the square. Because this square is commuting, the bundle produced by $\gamma_N$ (which is $E$) also appears in $\eta$. In Lemma \ref{admissible-lemma} we also checked that the mutation $\tau_3 \mapsto \eta$ is admissible with respect to $\gamma_{N-1}': \sigma' \mapsto \tau_3$. The sequence of mutations $\gamma_{N-2}' \circ \gamma_{N-1}' \circ \gamma_N'$ is the desired replacement of the tail of $\gamma$ in this case.
	
	Now suppose that $\gamma_{N-k}$ is non-commuting. Then the base case has $k = 0$, and $\gamma_N$ fits into a helix triangle with the reapplication of $\gamma_{N-1}$ to $\sigma_{N-1}$. See Figure \ref{fig:D}. 
	
	\begin{figure}[!htb]
		\begin{tikzcd}
			&  &                                    & \sigma_N \arrow[rd] &       \\
			\sigma_{N-2} \arrow[rr, "\gamma_{N-1}"] &  & \sigma_{N-1} \arrow[rr] \arrow[ru, "\gamma_N"] &                     & \cdot
		\end{tikzcd}
		\caption{The non-commuting base case for Proposition \ref{adm-replacement}.}
		\label{fig:D}
	\end{figure}
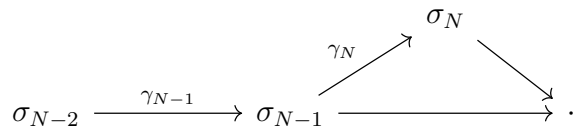

	As in Lemma \ref{admissible-lemma}, the mutation $\gamma_N$ fits into a commuting square with the mutation $\gamma_{N-1}'$, in the notation of Figure \ref{fig:D}. The new helix $\sigma_N'$ also contains $E$ as in the previous case. Moreover, Lemma \ref{admissible-lemma} also implies that $\gamma_N'$ is $\gamma_{N-1}'$-admissible. We extend Figure \ref{fig:D} with this additional set of mutations in Figure \ref{fig:E}. The pair of mutations $(\gamma_{N-1}', \gamma_N')$ replaces $(\gamma_{N-1}, \gamma_N)$ to form $\gamma'$. This completes the base case.
	
	\begin{figure}[!htb]
		\begin{tikzcd}
			&                                     & \sigma_N' \arrow[rd]               &                     &       \\
			& \sigma_{N-1}' \arrow[rd] \arrow[ru, "\gamma_N'"] &                                    & \sigma_N \arrow[rd] &       \\
			\sigma_{N-2} \arrow[rr, "\gamma_{N-1}"] \arrow[ru, "\gamma_{N-1}'"] &                                     & \sigma_{N-1} \arrow[rr] \arrow[ru, "\gamma_N"] &                     & \cdot
		\end{tikzcd}
		\caption{Extension of Figure \ref{fig:D} for Proposition \ref{adm-replacement}.}
		\label{fig:E}
	\end{figure}
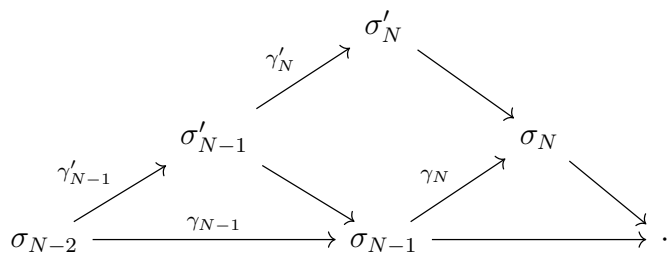
	
	We now carry out the inductive step, to show how a sequence $\gamma$ can be replaced by a new sequence $\gamma'$ with $k_{\gamma'} > k_{\gamma} = k$. It is entirely similar to the previous replacements. Then in at most $N$ steps, we arrive at a replacement for $\gamma$ in which each mutation is admissible. Again, the first non-admissible mutation $\gamma_{N-k}$ in the sequence is either $\gamma_{N-k-1}$-commuting, or fits in a helix triangle with the reapplication of $\gamma_{N-k-1}$. 
	
	First, suppose that $\gamma_{N-k}$ is $\gamma_{N-k-1}$-commuting. As in Figure \ref{fig:C}, the mutation $\gamma_{N-k+1}$ can be one of three possible admissible mutations, but only one of these is possible since $\gamma$ is assumed to have minimal length. As in the base case, we can form a square containing a new helix, such that the tail of the replacement sequence has exactly one more admissible mutation than does $\gamma$. To form a sequence of mutations that forms $E$, we extend this new subsequence with a sequence parallel to $\gamma$, as in Definition \ref{par-seq-def}. Each step in the parallel sequence differs from a $\sigma_j$ by a commuting square, so $E$ still appears in the final mutation of the parallel sequence.
	
	When $\gamma_{N-k}$ is non-$\gamma_{N-k-1}$-commuting, it fits into a helix triangle with the reapplication of $\gamma_{N-k-1}$. As in Figure \ref{fig:E}, we can again form a commuting square to replace $\gamma_{N-k}$ by a subsequence of mutations with exactly one more admissible mutation than $\gamma$. We can then extend this subsequence along a sequence parallel to $\gamma$ to finish the induction.
\end{proof}

It follows from Proposition \ref{adm-replacement} that every constructive exceptional bundle $E$ appears in an admissible helix $\sigma$ appearing in the subgraph $\Gamma_{\mathbb{P}^3}$ of $\text{H}_{\mathbb{P}^3}$. 

\begin{example} \label{adm-rep-ex}
We give an explicit example of the replacement of a sequence of non-admissible mutations by admissible ones to illustrate Proposition \ref{adm-replacement}. The bundle we will produce has slope $195/59$. (We will later recognize it as a twist of the bundle $E_{18/59}$.) In Figure \ref{fig:ex-fig-a} we give a diagram of mutations which produce this bundle.

\begin{figure}[!htb]
	\begin{tikzcd}
		&  & {(T, E_{11/15}, \mathcal{O}(3), T(2))}                                    &                                                           \\
		&  &                                                                           & {(T, \mathcal{O}(3), E_{195/59},T(2))} \arrow[lu, dotted] \\
		{(\mathcal{O}(1), T, \mathcal{O}(2), \mathcal{O}(3))} \arrow[rr] &  & {(\mathcal{O}(1), T, \mathcal{O}(3), T(2))} \arrow[uu, dotted] \arrow[ru] &                                                           \\
		&  &                                                                           &                                                           \\
		\sigma_0 \arrow[uu] \arrow[rr, dotted]                           &  & {(\mathcal{O}, \mathcal{O}(1), \mathcal{O}(3), T(2))} \arrow[uu, dotted]  &                                                          
	\end{tikzcd}
	\caption{Mutations for Example \ref{adm-rep-ex}, producing an exceptional bundle of slope $195/59$.}
	\label{fig:ex-fig-a}
\end{figure}
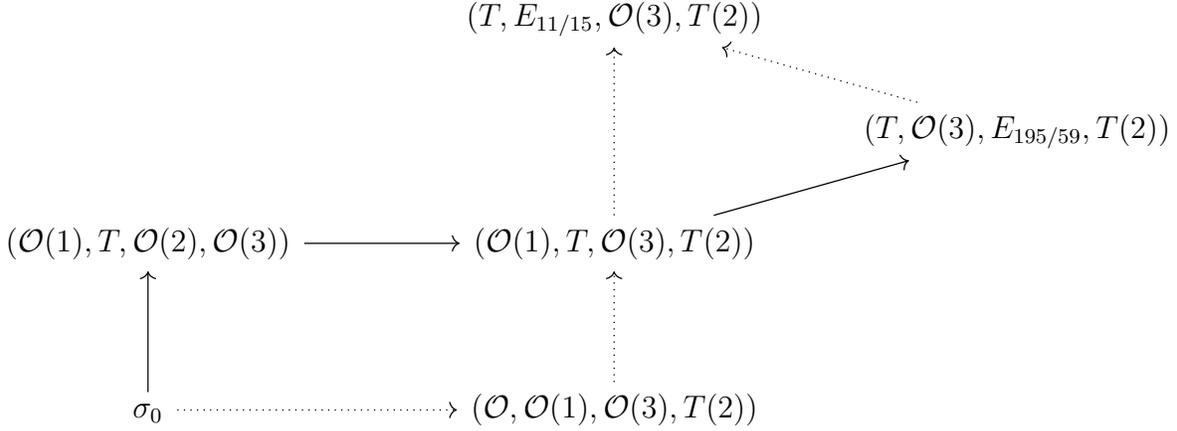

The dashed mutations are included for clarity; the dark mutations are the ones we consider. Specifically, the first mutation is the formation of the tangent bundle (see Example \ref{euler}). The second is the formation of the twist $T(2)$ of the tangent bundle, and the third is the left mutation of the pair $(T(2), \mathcal{O}(5))$ defined by the short exact sequence
	\[0 \rightarrow E_{195/59} \rightarrow T(2)^{20} \rightarrow \mathcal{O}(5) \rightarrow 0.\]

The second mutation is not admissible with respect to the first; instead, it is commuting. As in the proof of Proposition \ref{adm-replacement}, we can extend Figure \ref{fig:ex-fig-a} using helix triangles and commuting squares in Figure \ref{fig:ex-fig-b} below.

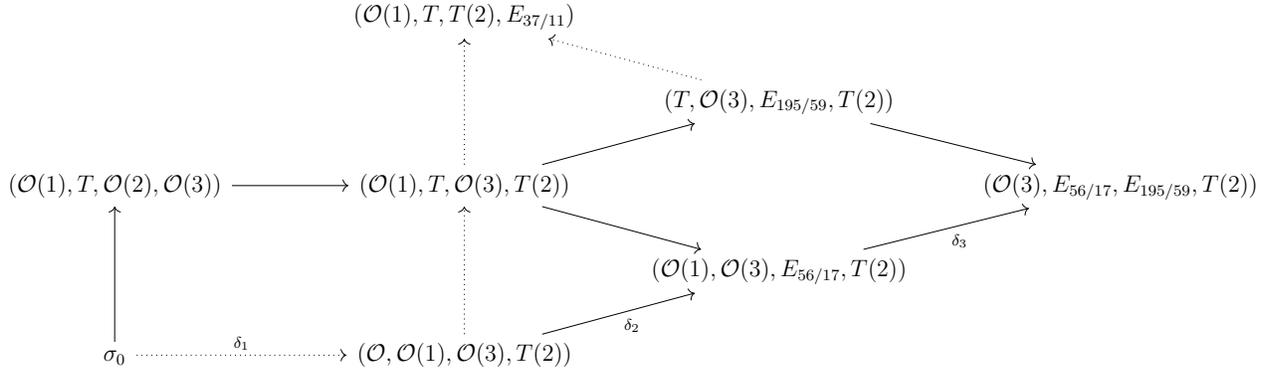
\begin{figure}[!htb]
	\adjustbox{scale = .75,center}{
	\begin{tikzcd}
		&  & {(\mathcal{O}(1), T, T(2), E_{37/11})}                                                           &                                                                             &                                               \\
		&  &                                                                                                  & {(T, \mathcal{O}(3), E_{195/59},T(2))} \arrow[lu, dotted] \arrow[rd]        &                                               \\
		{(\mathcal{O}(1), T, \mathcal{O}(2), \mathcal{O}(3))} \arrow[rr] &  & {(\mathcal{O}(1), T, \mathcal{O}(3), T(2))} \arrow[uu, dotted] \arrow[ru] \arrow[rd]             &                                                                             & {(\mathcal{O}(3), E_{56/17},E_{195/59},T(2))} \\
		&  &                                                                                                  & {(\mathcal{O}(1), \mathcal{O}(3), E_{56/17}, T(2))} \arrow[ru, "\delta_3"'] &                                               \\
		\sigma_0 \arrow[uu] \arrow[rr, "\delta_1", dotted]               &  & {(\mathcal{O}, \mathcal{O}(1), \mathcal{O}(3), T(2))} \arrow[uu, dotted] \arrow[ru, "\delta_2"'] &                                                                             &                                              
	\end{tikzcd}}
	\caption{Admissible replacement for Example \ref{adm-rep-ex}.}
	\label{fig:ex-fig-b}
\end{figure}

As in the proof of Proposition \ref{adm-replacement}, the sequence of mutations $\delta_3 \circ \delta_2 \circ \delta_1$ is admissible. The mutation $\delta_1$ is the standard formation of the twist $T(2)$ of the tangent bundle. The mutation $\delta_2$ is the left mutation of the pair $(T(2), \mathcal{O}(4))$. It is defined by a short exact sequence

\[0 \rightarrow E_{56/17} \rightarrow T(2)^6 \rightarrow \mathcal{O}(4) \rightarrow 0.\]

The mutation $\delta_3$ is the left mutation of the pair $(T(2), \mathcal{O}(5))$. It is defined by a short exact sequence

\[0 \rightarrow E_{195/59} \rightarrow T(2)^{20} \rightarrow \mathcal{O}(5) \rightarrow 0.\]

This is the same exact sequence that formed $E_{195/59}$ in the first sequence of mutations; this is because the pair $(T(2), \mathcal{O}(5))$ appears in both helices. This in turn is because these helices sit in a commuting square with one another.

In the terminology of Definition \ref{par-mut-def}, the mutations $\delta_3 \circ \delta_2 \circ \delta_1$ is parallel to the original sequence of mutations
\end{example}

\subsection{The graph $\Gamma_{\mathbb{P}^3}'$} Given a mutation $\gamma: \sigma \mapsto \tau$, there are exactly three $\gamma$-admissible mutations of $\tau$, which depend on whether $\gamma$ was a left or right mutation. In Proposition \ref{gamma-tree}, we will describe the helices obtained from admissible mutations, and it will be convenient to establish some extra notation for admissible mutations.

Choose a foundation $(E,F,G,H)$ for $\tau$. If $\gamma$ is a right mutation, we let $F$ denote the bundle produced by $\gamma$, and if $\gamma$ is a left mutation, we let $G$ denote the bundle produced by $\gamma$. E.g., we will write the helix obtained from $\sigma_0$ which contains the tangent bundle as $(\mathcal{O}(1), T_{\mathbb{P}^3}, \mathcal{O}(2), \mathcal{O}(3))$ because $T_{\mathbb{P}^3}$ is a right mutation, and we will write the helix obtained from $\sigma_0$ which contains $T^{\vee}_{\mathbb{P}^3}$ as $(\mathcal{O}(-3), \mathcal{O}(-2), T^{\vee}_{\mathbb{P}^3}, \mathcal{O}(-1))$ because $T^{\vee}_{\mathbb{P}^3}$ is a left mutation.

We isolate the three $\gamma$-admissible mutations of $\tau$ in terms of this foundation. For $\gamma$ a right mutation, we define the mutations $R0$, $L0$, and $R1$ of $\tau$, and for $\gamma$ a left mutation, we define the mutations $L1$, $R2$, and $L2$ of $\tau$, as follows.

\begin{align*}
	R0: (E,F,G,H) &\mapsto (E, R_E (H(-4)), F, G) \quad &&L0: (E,F,G,H) \mapsto (H(-4), E, L_FG, F) \\
	R1: (E,F,G,H) &\mapsto (F, R_FE, G, H) \quad &&L1: (E,F,G,H) \mapsto (E,F, L_GH, G) \\
	R2: (E,F,G,H) &\mapsto (G, R_GF, H, E(4)) \quad &&L2: (E,F,G,H) \mapsto (F, G, L_H(E(4)),H)
\end{align*}

Note that in each mutation, the target mutation is again expressed with the new bundle on the left for right mutations, and on the right for left mutations. See Figure \ref{fig:adm-graph} for a drawing, where the vertices are ordered left-to-right by the slope of the bundle introduced by the mutation. After fixing these slopes, the right mutation helix (the left dot in the middle row between 0 and 1) contains $T_{\mathbb{P}^3}(-1)$, and the left mutation helix (the right dot in the middle row between 0 and 1) contains $T^{\vee}_{\mathbb{P}^3}(2)$.

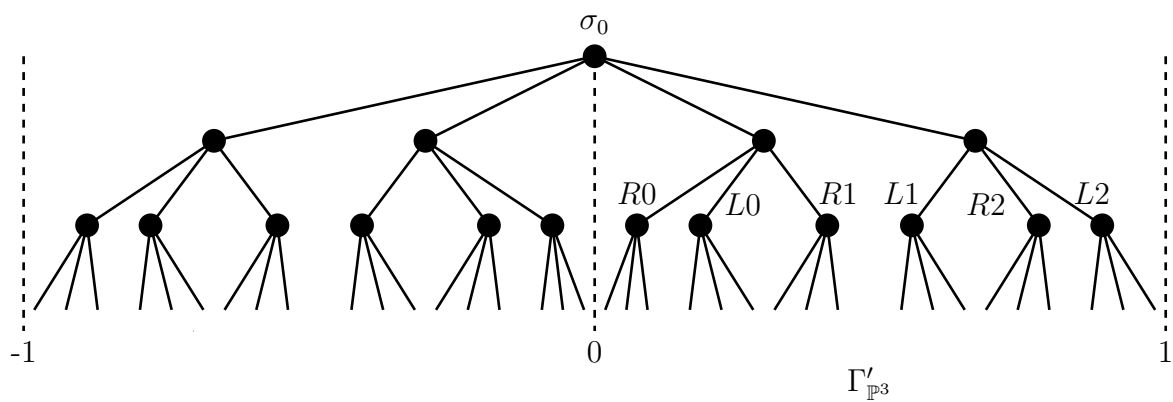
\begin{figure}[!htb]
	\centering
	\begin{tikzpicture}[x=0.4pt, y=0.4pt]
		\draw[solid, draw={rgb,255:red,0;green,0;blue,0}, draw opacity=1, line width=1, fill={rgb,255:red,0;green,0;blue,0}, fill opacity=1] (690,670) ellipse (10 and 10);
		\draw[solid, draw={rgb,255:red,0;green,0;blue,0}, draw opacity=1, line width=1, fill={rgb,255:red,0;green,0;blue,0}, fill opacity=1] (530,590) ellipse (0 and 0);
		\draw[solid, draw={rgb,255:red,0;green,0;blue,0}, draw opacity=1, line width=1, fill={rgb,255:red,0;green,0;blue,0}, fill opacity=1] (530,590) ellipse (10 and 10);
		\draw[solid, draw={rgb,255:red,0;green,0;blue,0}, draw opacity=1, line width=1, fill={rgb,255:red,0;green,0;blue,0}, fill opacity=1] (850,590) ellipse (10 and 10);
		\draw[solid, draw={rgb,255:red,0;green,0;blue,0}, draw opacity=1, line width=1, ] (530,590) -- (690,670);
		\draw[solid, draw={rgb,255:red,0;green,0;blue,0}, draw opacity=1, line width=1, ] (690,670) -- (850,590);
		\draw[solid, draw={rgb,255:red,0;green,0;blue,0}, draw opacity=1, line width=1, fill={rgb,255:red,0;green,0;blue,0}, fill opacity=1] (590,510) ellipse (10 and 10);
		\draw[solid, draw={rgb,255:red,0;green,0;blue,0}, draw opacity=1, line width=1, fill={rgb,255:red,0;green,0;blue,0}, fill opacity=1] (650,510) ellipse (10 and 10);
		\draw[solid, draw={rgb,255:red,0;green,0;blue,0}, draw opacity=1, line width=1, fill={rgb,255:red,0;green,0;blue,0}, fill opacity=1] (730,510) ellipse (0 and 0);
		\draw[solid, draw={rgb,255:red,0;green,0;blue,0}, draw opacity=1, line width=1, fill={rgb,255:red,0;green,0;blue,0}, fill opacity=1] (730,510) ellipse (10 and 10);
		\draw[solid, draw={rgb,255:red,0;green,0;blue,0}, draw opacity=1, line width=1, fill={rgb,255:red,0;green,0;blue,0}, fill opacity=1] (790,510) ellipse (10 and 10);
		\draw[solid, draw={rgb,255:red,0;green,0;blue,0}, draw opacity=1, line width=1, fill={rgb,255:red,0;green,0;blue,0}, fill opacity=1] (470,510) ellipse (10 and 10);
		\draw[solid, draw={rgb,255:red,0;green,0;blue,0}, draw opacity=1, line width=1, fill={rgb,255:red,0;green,0;blue,0}, fill opacity=1] (910,510) ellipse (10 and 10);
		\draw[solid, draw={rgb,255:red,0;green,0;blue,0}, draw opacity=1, line width=1, ] (530,590) -- (470,510);
		\draw[solid, draw={rgb,255:red,0;green,0;blue,0}, draw opacity=1, line width=1, ] (530,590) -- (590,510);
		\draw[solid, draw={rgb,255:red,0;green,0;blue,0}, draw opacity=1, line width=1, ] (530,590) -- (650,510);
		\draw[solid, draw={rgb,255:red,0;green,0;blue,0}, draw opacity=1, line width=1, ] (850,590) -- (730,510);
		\draw[solid, draw={rgb,255:red,0;green,0;blue,0}, draw opacity=1, line width=1, ] (850,590) -- (790,510);
		\draw[solid, draw={rgb,255:red,0;green,0;blue,0}, draw opacity=1, line width=1, ] (850,590) -- (910,510);
		\draw[solid, draw={rgb,255:red,0;green,0;blue,0}, draw opacity=1, line width=1, fill={rgb,255:red,0;green,0;blue,0}, fill opacity=1] (390,510) ellipse (10 and 10);
		\draw[solid, draw={rgb,255:red,0;green,0;blue,0}, draw opacity=1, line width=1, fill={rgb,255:red,0;green,0;blue,0}, fill opacity=1] (270,510) ellipse (10 and 10);
		\draw[solid, draw={rgb,255:red,0;green,0;blue,0}, draw opacity=1, line width=1, fill={rgb,255:red,0;green,0;blue,0}, fill opacity=1] (210,510) ellipse (10 and 10);
		\draw[solid, draw={rgb,255:red,0;green,0;blue,0}, draw opacity=1, line width=1, fill={rgb,255:red,0;green,0;blue,0}, fill opacity=1] (330,590) ellipse (10 and 10);
		\draw[solid, draw={rgb,255:red,0;green,0;blue,0}, draw opacity=1, line width=1, ] (330,590) -- (390,510);
		\draw[solid, draw={rgb,255:red,0;green,0;blue,0}, draw opacity=1, line width=1, ] (330,590) -- (270,510);
		\draw[solid, draw={rgb,255:red,0;green,0;blue,0}, draw opacity=1, line width=1, ] (330,590) -- (210,510);
		\draw[solid, draw={rgb,255:red,0;green,0;blue,0}, draw opacity=1, line width=1, ] (330,590) -- (690,670);
		\draw[solid, draw={rgb,255:red,0;green,0;blue,0}, draw opacity=1, line width=1, fill={rgb,255:red,0;green,0;blue,0}, fill opacity=1] (990,510) ellipse (10 and 10);
		\draw[solid, draw={rgb,255:red,0;green,0;blue,0}, draw opacity=1, line width=1, fill={rgb,255:red,0;green,0;blue,0}, fill opacity=1] (1110,510) ellipse (10 and 10);
		\draw[solid, draw={rgb,255:red,0;green,0;blue,0}, draw opacity=1, line width=1, fill={rgb,255:red,0;green,0;blue,0}, fill opacity=1] (1170,510) ellipse (10 and 10);
		\draw[solid, draw={rgb,255:red,0;green,0;blue,0}, draw opacity=1, line width=1, fill={rgb,255:red,0;green,0;blue,0}, fill opacity=1] (1050,590) ellipse (10 and 10);
		\draw[solid, draw={rgb,255:red,0;green,0;blue,0}, draw opacity=1, line width=1, ] (990,510) -- (1050,590);
		\draw[solid, draw={rgb,255:red,0;green,0;blue,0}, draw opacity=1, line width=1, ] (1050,590) -- (1110,510);
		\draw[solid, draw={rgb,255:red,0;green,0;blue,0}, draw opacity=1, line width=1, ] (1050,590) -- (1170,510);
		\draw[solid, draw={rgb,255:red,0;green,0;blue,0}, draw opacity=1, line width=1, ] (1050,590) -- (690,670);
		\draw[solid, draw={rgb,255:red,0;green,0;blue,0}, draw opacity=1, line width=1, ] (160,430) -- (210,510);
		\draw[solid, draw={rgb,255:red,0;green,0;blue,0}, draw opacity=1, line width=1, ] (210,510) -- (190,430);
		\draw[solid, draw={rgb,255:red,0;green,0;blue,0}, draw opacity=1, line width=1, ] (210,510) -- (220,430);
		\draw[solid, draw={rgb,255:red,0;green,0;blue,0}, draw opacity=1, line width=1, ] (270,510) -- (260,430);
		\draw[solid, draw={rgb,255:red,0;green,0;blue,0}, draw opacity=1, line width=1, ] (270,510) -- (290,430);
		\draw[solid, draw={rgb,255:red,0;green,0;blue,0}, draw opacity=1, line width=1, ] (270,510) -- (320,430);
		\draw[solid, draw={rgb,255:red,0;green,0;blue,0}, draw opacity=1, line width=1, ] (340,430) -- (390,510);
		\draw[solid, draw={rgb,255:red,0;green,0;blue,0}, draw opacity=1, line width=1, ] (390,510) -- (370,430);
		\draw[solid, draw={rgb,255:red,0;green,0;blue,0}, draw opacity=1, line width=1, ] (400,430) -- (390,510);
		\draw[solid, draw={rgb,255:red,0;green,0;blue,0}, draw opacity=1, line width=1, fill={rgb,255:red,0;green,0;blue,0}, fill opacity=1] (310,410) ellipse (0 and 0);
		\draw[solid, draw={rgb,255:red,0;green,0;blue,0}, draw opacity=1, line width=1, ] (470,510) -- (460,430);
		\draw[solid, draw={rgb,255:red,0;green,0;blue,0}, draw opacity=1, line width=1, ] (470,510) -- (490,430);
		\draw[solid, draw={rgb,255:red,0;green,0;blue,0}, draw opacity=1, line width=1, ] (470,510) -- (520,430);
		\draw[solid, draw={rgb,255:red,0;green,0;blue,0}, draw opacity=1, line width=1, ] (590,510) -- (600,430);
		\draw[solid, draw={rgb,255:red,0;green,0;blue,0}, draw opacity=1, line width=1, ] (590,510) -- (570,430);
		\draw[solid, draw={rgb,255:red,0;green,0;blue,0}, draw opacity=1, line width=1, ] (590,510) -- (540,430);
		\draw[solid, draw={rgb,255:red,0;green,0;blue,0}, draw opacity=1, line width=1, ] (650,510) -- (640,430);
		\draw[solid, draw={rgb,255:red,0;green,0;blue,0}, draw opacity=1, line width=1, ] (650,510) -- (660,430);
		\draw[solid, draw={rgb,255:red,0;green,0;blue,0}, draw opacity=1, line width=1, ] (650,510) -- (680,430);
		\draw[solid, draw={rgb,255:red,0;green,0;blue,0}, draw opacity=1, line width=1, ] (700,430) -- (730,510);
		\draw[solid, draw={rgb,255:red,0;green,0;blue,0}, draw opacity=1, line width=1, ] (730,510) -- (720,430);
		\draw[solid, draw={rgb,255:red,0;green,0;blue,0}, draw opacity=1, line width=1, ] (730,510) -- (740,430);
		\draw[solid, draw={rgb,255:red,0;green,0;blue,0}, draw opacity=1, line width=1, ] (790,510) -- (780,430);
		\draw[solid, draw={rgb,255:red,0;green,0;blue,0}, draw opacity=1, line width=1, ] (790,510) -- (810,430);
		\draw[solid, draw={rgb,255:red,0;green,0;blue,0}, draw opacity=1, line width=1, ] (790,510) -- (840,430);
		\draw[solid, draw={rgb,255:red,0;green,0;blue,0}, draw opacity=1, line width=1, ] (910,510) -- (920,430);
		\draw[solid, draw={rgb,255:red,0;green,0;blue,0}, draw opacity=1, line width=1, ] (910,510) -- (890,430);
		\draw[solid, draw={rgb,255:red,0;green,0;blue,0}, draw opacity=1, line width=1, ] (910,510) -- (860,430);
		\draw[solid, draw={rgb,255:red,0;green,0;blue,0}, draw opacity=1, line width=1, ] (990,510) -- (980,430);
		\draw[solid, draw={rgb,255:red,0;green,0;blue,0}, draw opacity=1, line width=1, ] (990,510) -- (1010,430);
		\draw[solid, draw={rgb,255:red,0;green,0;blue,0}, draw opacity=1, line width=1, ] (990,510) -- (1040,430);
		\draw[solid, draw={rgb,255:red,0;green,0;blue,0}, draw opacity=1, line width=1, ] (1110,510) -- (1120,430);
		\draw[solid, draw={rgb,255:red,0;green,0;blue,0}, draw opacity=1, line width=1, ] (1110,510) -- (1090,430);
		\draw[solid, draw={rgb,255:red,0;green,0;blue,0}, draw opacity=1, line width=1, ] (1110,510) -- (1060,430);
		\draw[solid, draw={rgb,255:red,0;green,0;blue,0}, draw opacity=1, line width=1, ] (1170,510) -- (1160,430);
		\draw[solid, draw={rgb,255:red,0;green,0;blue,0}, draw opacity=1, line width=1, ] (1170,510) -- (1190,430);
		\draw[solid, draw={rgb,255:red,0;green,0;blue,0}, draw opacity=1, line width=1, ] (1170,510) -- (1220,430);
		\draw[dashed, draw={rgb,255:red,0;green,0;blue,0}, draw opacity=1, line width=1, ] (690,670) -- (690,410);
		\draw[dashed, draw={rgb,255:red,0;green,0;blue,0}, draw opacity=1, line width=1, ] (1230,670) -- (1230,410);
		\draw[dashed, draw={rgb,255:red,0;green,0;blue,0}, draw opacity=1, line width=1, ] (150,670) -- (150,410);
		\node at (150,390) [opacity=1] {\textcolor[RGB]{0,0,0}{-1}};
		\node at (690,390) [opacity=1] {\textcolor[RGB]{0,0,0}{0}};
		\node at (1230,390) [opacity=1] {\textcolor[RGB]{0,0,0}{1}};
		\node at (950,360) [opacity=1] {\textcolor[RGB]{0,0,0}{$\Gamma_{\mathbb{P}^3}'$}};
		\node at (730,540) [opacity=1] {\textcolor[RGB]{0,0,0}{$R0$}};
		\node at (830,530) [opacity=1] {\textcolor[RGB]{0,0,0}{$L0$}};
		\node at (920,540) [opacity=1] {\textcolor[RGB]{0,0,0}{$R1$}};
		\node at (980,540) [opacity=1] {\textcolor[RGB]{0,0,0}{$L1$}};
		\node at (1060,530) [opacity=1] {\textcolor[RGB]{0,0,0}{$R2$}};
		\node at (1160,540) [opacity=1] {\textcolor[RGB]{0,0,0}{$L2$}};
		\node at (690,700) [opacity=1] {\textcolor[RGB]{0,0,0}{$\sigma_0$}};
	\end{tikzpicture}
	
	\caption{Cartoon of the admissible mutations for left and right mutation helices, ordered by the slope of the new bundle produced. In the figure, the vertices correspond to helices and the edges to mutations. }
	\label{fig:adm-graph}
\end{figure}

Consider a helix $\sigma$ corresponding to a vertex $v$ in $\Gamma_{\PP^3}'$. Let $\sigma_1, \sigma_2$, and $\sigma_3$ be its three admissible mutations, and set $\gamma_i: \sigma \mapsto \sigma_i$ to be the mutation producing $\sigma_i$ from $\sigma$. Each of the $\sigma_j$ also correspond to vertices $v_j$ in $\Gamma_{\PP^3}'$. It follows from Lemma \ref{mutations} that the $\sigma_j$ are pairwise distinct. If $E_i$ is the bundle appearing in $\sigma_i$ formed by $\gamma_i$ with slope between 0 and 1, then order the $\sigma_j$ by setting $\mu(E_1) < \mu(E_2) < \mu(E_3)$. For $j = 1$, $2$, or $3$, we define subgraphs $\Gamma^{\sigma}_j$ of $\Gamma'$ by letting $\Gamma^{\sigma}_j$ consist of all the vertices of $\Gamma'$ corresponding to helices obtained from $\sigma_j$ by a sequence of admissible mutations, and edges corresponding to those mutations. 

For each helix $\sigma_j$ corresponding to the vertex $v_j$, we consider the following sequences of mutations. If $\sigma \mapsto \sigma_j$ is a right mutation, we set $\{R1^k(\sigma_j)\}_{k \geq 0}$ to be the right branch off of $\sigma_j$, and if $\sigma \mapsto \sigma_j$ is a left mutation, we set $\{R2^k(\sigma_j)\}_{k \geq 0}$ to be the right branch. In the right mutation case, we set $\{R0^k(\sigma_j)\}_{k \geq 0}$ to be the left branch, and in the left mutation case, $\{L1^k(\sigma_j)\}_{k \geq 0}$ to be the left branch.

\begin{lemma} \label{mini}
	Suppose that $\gamma: \sigma \mapsto \tau$ is a right mutation producing $F$ and $F_1, F_2$, and $F_3$ are its admissible mutation bundles. Then 
		\[\mu(F_1) < \mu(R1(F_1)) < \mu(L1(F_2)) < \mu(F_2).\]
\end{lemma}

\begin{proof}
	The first and last inequalities follow from the definitions of the mutations $R1$ and $L1$. For the middle one, set $(E,F,G,H)$ to be a foundation for $\tau$. Then the admissible mutations have foundations $(E, F_1, F, G)$, $(F, F_2, G, H)$, and $(E, F_3, F, H)$ respectively. The first two of these are commuting, with a non-admissible mutation $(E, F_1, F_2, G)$ of both. Then we have another helix with foundation $(F_1, R1(F_1), L1(F_2), F_2)$, and the middle inequality of the lemma follows from Lemma \ref{mutations}.
\end{proof}

\begin{lemma} \label{tree-lemma}
	Suppose that $i < j$. Then the right branch off of $\sigma_i$ is disjoint from the left branch off of $\sigma_j$. In particular, for each vertex $v_{i,k}$ of the right branch off of $\sigma_i$ and each vertex $v'_{j, \ell}$ of the left branch of $\sigma_j$, we have $\mu(v_{i,k}) < \mu(v'_{j, \ell})$.
\end{lemma}

\begin{proof}
	The second statement obviously implies the first. We can reduce the proof of the lemma to two cases, as follows. We will consider the case where $E$ is a right mutation, the left mutation case being entirely similar. Note that each of the descendants of $E_2$ have slope less than each of the descendants of $E_3$. This is because the (admissible) descendants of $E_2$ have slope less than the slope of $E$, and the descendants of $E_3$ have slope greater than the slope of $E$; these statements both follow from Lemma \ref{mutations}. Thus it is enough to separate the branches off of $E_i = E_1$ and $E_j = E_2$, and for that, it is enough to consider the rightmost branch off of $E_1$ and the leftmost branch off of $E_2$, ordered by slope. Note that the rightmost branch off of $E_1$ is obtained through successive $R1$ mutations, and the leftmost branch off of $E_2$ is obtained through successive $L1$ mutations.
			
	Consider the limit $\overline{\mu}$ of the slopes $\mu(v_{1,k})$ as $k$ goes to infinity (on the right branch), and the limit $\overline{\mu}'$ of the slopes $\mu(v_{2, \ell})$ as $\ell$ goes to infinity (on the left branch). Both limits exist because the sequences are monotone and bounded below by 0 and above by 1. We will show that $\overline{\mu} < \overline{\mu}'$, which is sufficient to prove the lemma.
	
	We can compute the limits $\overline{\mu}$ and $\overline{\mu}'$ explicitly, as follows. For each $k$, set $R_k$ to be the unique bundle with slope between $0$ and $1$ by the mutation $R1^k$ in the right branch off of $\sigma_i$. Note that there are canonical isomorphisms
		\[\Hom(R_k, R_{k+1}) \simeq \Hom(R_{k+1}, R_{k+1})^{\vee}\]
	so in particular their dimensions agree. Set $h_R$ for this value on the right branch off of $\sigma_i$ and $h_L$ for this value along the left branch off of $\sigma_j$. Then along the right branch, we have
		\[r(R_{k+1}) = h_R r(R_k) - r(R_{k-1}), \quad d(R_{k+1}) = h_R d(R_k) - d(R_{k-1}).\]
	The rank $r(R_n)$ and degree $d(R_n)$ can be written explicitly by solving these generalized Fibonacci relations.
	
	To do so, let us focus first on the right branch $\{R_k\}$. Set $r_i = r(R_i)$ for each $i$ and $d_i = d(R_i)$. Then we set 
		\[R(z) = \sum_{n \geq 0} r_nz^n, \quad D(z) = \sum_{n \geq 0} d_nz^n\]
	to be the associated generating functions. We have
	
	\begin{align*}
		R(z) &= r_0 + r_1z + \sum_{n \geq 0} (h_R r(R_{n+1}) - r(R_n))z^{n+2} \\
		&= r_0 + r_1z + h_R z \sum_{n \geq 0} r(R_{n+1}) z^{n+1} - z^2\sum_{n \geq 0} r(R_n)z^n \\
		&= r_0 + r_1z + h_R z (R(z) - r_0) - z^2 R(z)
	\end{align*}

Therefore
	\[R(z) = \frac{r_0 + (r_1-h_R r_0)z}{z^2 - h_R z + 1}.\]
Let $\varphi_R$ and $\psi_R$ denote the roots of the polynomial $z^2 - h_R z + 1$. Both $\varphi_R$ and $\psi_R$ are positive, and without loss of generality we have $0 < \varphi_R < 1 < \psi_R$. 

We solve the partial fraction decomposition
	\[R(z) = \frac{A_R}{z - \varphi_R} + \frac{B_R}{z - \psi_R}\]
with
	\[A_R = \frac{r_0 + (r_1-h_R r_0)\varphi_R}{\varphi_R - \psi_R}, \quad B_R = \frac{r_0 + (r_1 - h_R r_0)\psi_R}{\psi_R - \varphi_R}. \]
	
Entirely similarly, we have
	\[D(z) = \frac{C_R}{z - \varphi_R} + \frac{D_R}{z - \psi_R}\]
with
	\[C_R = \frac{d_0 + (d_1 - h_R d_0)\varphi_R}{\varphi_R- \psi_R}, \quad D_R = \frac{d_0 + (d_1 - h_R d_0) \psi_R}{\psi_R - \varphi_R}.\]

Now we can solve, using the fact that $\varphi_R \psi_R= 1$,
	\begin{align*}
		R(z) &=  \frac{A_R}{z - \varphi_R} + \frac{B_R}{z - \psi_R} \\
		&= A_R \frac{1}{z - \varphi_R} + B_R \frac{1}{z - \psi_R} \\
		&= -A_R \frac{1/\varphi_R}{1-\psi_R z}  - B_R \frac{1/\psi_R}{1 - \varphi_Rz} \\
		&= -\frac{A_R}{\varphi_R} \sum_{n \geq 0} (\psi_R z)^n - \frac{B_R}{\psi_R} \sum_{n \geq 0} (\varphi_R z)^n. 
	\end{align*}

Thus we have
	\[r_n = -\frac{A_R}{\varphi_R} \psi_R^n - \frac{B_R}{\psi_R} \varphi_R^n\]
and similarly
	\[d_n = -\frac{C_R}{\varphi_R} \psi_R^n - \frac{D_R}{\psi_R} \varphi_R^n.\]

In particular, we have
	\[\mu(R_n) = \frac{d_n}{r_n} = \frac{\frac{C_R}{\varphi_R} \psi_R^n + \frac{D_R}{\psi_R} \varphi_R^n}{\frac{A_R}{\varphi_R} \psi_R^n + \frac{B_R}{\psi_R} \varphi_R^n}.\]
Multiplying top and bottom by $\varphi^n$ and taking the limit as $n \rightarrow \infty$, we obtain
	\[\overline{\mu} = \lim_{n \rightarrow \infty} \frac{\frac{C_R}{\varphi_R} + \frac{D_R}{\psi_R} \varphi_R^{2n}}{\frac{A_R}{\varphi_R} + \frac{B_R}{\psi_R} \varphi_R^{2n}} = \frac{C_R}{A_R}\]
as $0 < \varphi_R < 1$ so $\varphi_R^n \rightarrow 0$.

Explicitly, this is
	\[\overline{\mu} = \frac{d_0 + (d_1 - h_R d_0) \varphi_R}{r_0 + (r_1 - h_R r_0) \varphi_R} =: \frac{\overline{d}}{\overline{r}}.\]

Recall that we want to show that $\overline{\mu} < \overline{\mu}'$. The same process as above gives us
	\[\overline{\mu}' = \frac{d'_0 + (d'_1 - h_L d'_0) \varphi_L}{r'_0 + (r'_1 - h_L r'_0) \varphi_L} =: \frac{\overline{d}'}{\overline{r}'}.\]

It is sufficient to show that $\overline{d}'\overline{r} - \overline{d}\overline{r}' > 0$. We have
	\begin{align} \label{brrr}
		\overline{d}' \overline{r} - \overline{d} \overline{r}' &= (d'_0 + (d'_1 - h_L d'_0) \varphi_L)(r_0 + (r_1 - h_R r_0) \varphi_R) \nonumber \\
		&\qquad - (d_0 + (d_1 - h_R d_0) \varphi_R)(r'_0 + (r'_1 - h_L r'_0) \varphi_L) \nonumber \\
		&= ((d'_1 - h_L d'_0)(r_1 - h_R r_0) - (d_1 - h_R d_0)(r'_1 - h_L r'_0)) \varphi_R \varphi_L \nonumber \\ 
		&\quad + (d_0'(r_1 - h_R r_0) - (d_1 - h_R d_0)r'_0) \varphi_R + ((d'_1 - h_Ld'_0)r_0 - d_0(r'_1 - h_L r'_0)) \varphi_L \\
		& \qquad + (d'_0r_0 - d_0r'_0). \nonumber
	\end{align}

We will show that each coefficient on $\varphi_L \varphi_R, \varphi_R, \varphi_L, 1 > 0$ is positive. Note that in the mutation forming $R_1$ from $R_0$, we have an exact sequence
	\[0 \rightarrow Z \rightarrow R_0^{\oplus h_R} \rightarrow R_1 \rightarrow 0\]
so that $d(Z) = h_R d_0 - d_1$ and $r(Z) = h_R r_0 - r_1$. Similarly, we have an exact sequence
	\[0 \rightarrow L_1 \rightarrow L_0^{\oplus h_L} \rightarrow Z' \rightarrow 0\]
with $d(Z') = h_L d_0' - d_1'$ and $r(Z) = h_L r_0' - r_1'$. We have $\mu(Z) < \mu(E_0) < \mu(E_1) < \mu(E_1') < \mu(E_0')$ by Lemmas \ref{mutations} and \ref{mini}.

Then the constant term of (\ref{brrr}) is positive since $\mu(E_0') > \mu(E_0)$. The coefficient on $\varphi_R$ is positive since $\mu(E_0') > \mu(Z)$, the coefficient on $\varphi_L$ is positive since $\mu(E_0) < \mu(Z')$, and the coefficient on $\varphi_L \varphi_R$ is positive since $\mu(Z) < \mu(Z')$. Thus $\overline{\mu} < \overline{\mu}'$, and the lemma is proved.
\end{proof}

The following proposition, which shows that the inductive structure of $\Gamma_{\PP^3}'$ is especially simple, is a corollary of Lemma \ref{tree-lemma}. It will be fundamental to establishing our main theorems.

\begin{proposition} \label{gamma-tree}
	The graph $\Gamma'_{\PP^3}$ is a connected tree; every vertex except for that corresponding to $\sigma_0$ has degree 4, and $\sigma_0$ has degree 2. Every constructive exceptional bundle $E$ of slope $0 < \mu(E) < 1$ occurs in a helix in $\Gamma_{\PP^3}'$.
\end{proposition}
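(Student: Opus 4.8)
The plan is to prove the three assertions in turn, with the degree count and the tree structure both resting on a single slope-containment statement that I would establish by induction on the distance from $\sigma_0$.

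First I would pin down the root. Writing out all eight mutations of $\sigma_0$ on the preferred foundation $(\OO(-1),\OO,\OO(1),\OO(2))$ using the Euler sequence (Example \ref{euler}), the eight bundles produced are the twists $T_{\PP^3}(-2),T_{\PP^3}(-1),T_{\PP^3},T_{\PP^3}(1)$ and $T_{\PP^3}^\vee,T_{\PP^3}^\vee(1),T_{\PP^3}^\vee(2),T_{\PP^3}^\vee(3)$, of slopes $-\tfrac23,\tfrac13,\tfrac43,\tfrac73$ and $-\tfrac43,-\tfrac13,\tfrac23,\tfrac53$. Exactly two of these, $T_{\PP^3}(-1)$ of slope $\tfrac13$ and $T_{\PP^3}^\vee(2)$ of slope $\tfrac23$, lie in $(0,1)$; hence $\sigma_0$ has degree $2$ in $\Gamma'_{\PP^3}$, the two incident edges being one right and one left mutation, and the depth-one vertices are $(\OO,T_{\PP^3}(-1),\OO(1),\OO(2))$ and $(\OO(-1),\OO,T_{\PP^3}^\vee(2),\OO(1))$.

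Next, the degree-four claim. By Definition \ref{admissible} every non-root vertex $\tau$ has exactly three $\gamma$-admissible outgoing mutations together with a single edge back to its parent, so degree four is equivalent to the claim $(\dagger)$: for each non-root $\tau\in\Gamma'_{\PP^3}$ all three admissible mutations again produce bundles of slope in $(0,1)$. I would prove $(\dagger)$ by induction, carrying the invariant that in a suitable foundation of $\tau$ the elements of slope in $[0,1]$ form a consecutive block containing all of the already-introduced non-line bundles, the block being capped on each side by a line bundle of slope $0$ or $1$ or by an introduced bundle of slope in $(0,1)$. The role of Definition \ref{admissible} is that the three admissible mutations act only on pairs internal to this block, or across its lower edge, and never on the pair lying just above the block (such as $(\OO(1),\OO(2))$ at depth one), whose mutation would escape above $1$; by the slope ordering of Lemma \ref{mutations} the bundle each admissible mutation produces is then inserted strictly between the slopes of two consecutive block elements, hence lands strictly inside $(0,1)$, and one checks directly that the block is preserved (possibly refined). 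The base case is the explicit computation at the two depth-one vertices; e.g.\ from $(\OO,T_{\PP^3}(-1),\OO(1),\OO(2))$ the three admissible mutations produce bundles of slopes $\tfrac29,\tfrac{4}{11},\tfrac{5}{17}$, all in $(0,1)$.

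For the tree structure I would show $\Gamma'_{\PP^3}$ is connected and acyclic. Connectivity is immediate: if $\tau$ is reached from $\sigma_0$ by admissible mutations each producing a slope in $(0,1)$, then every initial segment of that path has the same property, so every helix on the path lies in $\Gamma'_{\PP^3}$ and is joined to $\sigma_0$ inside it. Acyclicity follows because the construction attaches each vertex by a single new edge and one verifies that distinct admissible words yield distinct helices, via a monotonicity statement: each admissible mutation introduces a bundle whose rank strictly exceeds that of the bundle introduced at the previous step (the ranks of successively introduced bundles grow, e.g.\ $3,9,11,17,\dots$ along the right branch), so no admissible path can return to an earlier helix. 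Being connected and acyclic, $\Gamma'_{\PP^3}$ is a tree, and together with the degree computation this settles the first two assertions. The remaining assertion, surjectivity, I would derive from Proposition \ref{adm-replacement}: a constructive exceptional $E$ with $0<\mu(E)<1$ lies in some admissible helix of $\Gamma_{\PP^3}$, and since $\mu(E)\notin\Z$ the bundle $E$ is not a line bundle and is genuinely introduced by an admissible mutation along the corresponding path; the slope-nesting from $(\dagger)$ forces the first in-range bundle on that path to be produced at one of the two depth-one vertices and every subsequent mutation up to the one introducing $E$ to remain in $(0,1)$, so the helix first containing $E$ already lies in $\Gamma'_{\PP^3}$. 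The hard part throughout is exactly the slope bookkeeping behind $(\dagger)$ and the rank monotonicity: one must verify, via Lemma \ref{mutations} and Chern-character computations, that all three admissible mutations in both the left- and right-mutation cases interlace correctly with the block slopes and never escape $[0,1]$, that the relevant complexity strictly increases, and that these facts together force the surjectivity conclusion.
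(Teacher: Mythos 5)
Your computation of the eight mutations of $\sigma_0$ (giving degree $2$ at the root), your reduction of the degree-four claim to the slope statement $(\dagger)$ via nesting of slope brackets, and your derivation of surjectivity from Proposition \ref{adm-replacement} plus that same nesting all track the paper's argument closely. The genuine problem is your proof of acyclicity. Rank monotonicity along an admissible path only shows that a \emph{single} path never revisits a helix; it does not show that two \emph{distinct} admissible words end at distinct helices, and that is exactly what a cycle in $\Gamma'_{\PP^3}$ would be. Since $\Gamma'_{\PP^3}$ is a subgraph of $\text{H}_{\PP^3}$, whose vertices are identified with distinct helices, a cycle arises precisely when two different branches of the construction converge on the same helix; both converging paths can perfectly well carry strictly increasing ranks, so your invariant is consistent with the existence of a cycle and cannot rule one out. (Separately, the rank-growth claim itself is only asserted, not proved, but that is secondary to this logical insufficiency.)

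The paper closes exactly this gap with a branch-separation argument whose ingredients you already have but never deploy. For a vertex $\sigma$ with admissible mutations $\gamma_i\colon \sigma \mapsto \sigma_i$, $i=1,2,3$, the slope interlacing
\[\mu(E) < \mu(R0) < \mu(L0) < \mu(F) < \mu(R1) < \mu(L1) < \mu(G) < \mu(R2) < \mu(L2) < \mu(H)\]
from Lemma \ref{mutations}, applied inductively to mutations of mutations, confines every bundle introduced in the subgraph $\Gamma'_i$ generated by $\gamma_i$ to a slope interval disjoint from the one attached to $\Gamma'_j$ for $j \neq i$ (e.g.\ all descendants of the $R0$-child have slopes below all descendants of the $L0$-child, which lie below all descendants of the $R1$-child). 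Hence $\Gamma'_i$ and $\Gamma'_j$ share no vertices, and induction on depth gives the tree property. Your block invariant already records this interlacing, so the fix is to strengthen it from ``all introduced slopes stay in $(0,1)$'' to ``the three branches out of each vertex occupy pairwise disjoint open slope intervals,'' and to derive acyclicity from that disjointness rather than from rank growth.
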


\begin{proof}
	The fact that $\Gamma_{\PP^3}'$ is a tree follows from Lemma \ref{tree-lemma}. Specifically, the branches off of a single vertex are disjoint from the lemma. The proof of the lemma shows that to check disjointness of two branches, all that is needed is that the first two leaves in the first branch are bounded by the first two leaves in the other branch. This is clearly the case for branches off of distinct vertices as well. 
	
	The proof of Lemma \ref{tree-lemma} also shows that the branches of this tree are ordered by the slope of the bundle produced. In particular, it follows that no admissible mutation can produce a bundle of slope between 0 and 1 from a bundle with slope less than 0 or larger than 1. It follows from this and Proposition \ref{adm-replacement} that each constructive exceptional bundle of slope between 0 and 1 appears in $\Gamma_{\PP^3}'$, and connectedness also follows from this.
\end{proof}

\section{Classifying constructive exceptional bundles} \label{class-section}

In this section we construct a bijection $\epsilon_{\PP^3}$ between the 3-adic rationals and the Chern characters of constructive exceptional bundles on $\PP^3$, analogous to Dr\'ezet and Le Potier's construction of $\epsilon_{\PP^2}$ on $\PP^2$. We begin with some notational set-up.

\begin{notation}
The rational Grothendieck group $K(\PP^3)_{\mathbb{Q}} = K(D^b(\PP^3))_{\mathbb{Q}}$ of $\PP^3$ is isomorphic to the rational Chow ring $A^*(\PP^3) \otimes \QQ$ (or singular cohomology $H^*(\PP^3, \QQ)$) via the Chern character map 
	\[\ch: K(\PP^3) \otimes \QQ \rightarrow A^*(\PP^3) \otimes \QQ \simeq \QQ^4, \quad v \mapsto (\ch_0(v).H^3, \ch_1(v).H^2, \ch_2(v).H, \ch_3(v))\]
where $H$ is the hyperplane class. The Chern characters $\ch_i$ can be regarded as $\QQ$-valued functions $\ch_i = \ch_i.H^{3-i}$ on $K(\PP^3)$. These form an integral basis, and we use them as coordinates.

Hirzebruch-Riemann-Roch (which we will often shorten to simply Riemann-Roch) expresses the bilinear form $\chi(-,-)$ on $K(\PP^3)$ in terms of these coordinates as follows:
	\[\chi(\OO, E) = \chi(E) = \ch_0(E) + \frac{11}{6}\ch_1(E) + 2\ch_2(E) + \ch_3(E)\]
(where the coefficients can be read off as the Todd class of $\PP^3$ backwards), and
	\begin{align*}
		\chi(E,F) = \chi(F \otimes E^{\vee}) &= \ch_0(F)\ch_0(E) \\
		 &+ \frac{11}{6}(\ch_1(F)\ch_0(E) - \ch_1(E)\ch_0(F)) \\
		&+ 2(\ch_2(F)\ch_0(E)-\ch_1(F)\ch_1(E)+\ch_2(E)\ch_0(F)) \\
		&+ \ch_3(F)\ch_0(E) - \ch_2(F)\ch_1(E) + \ch_2(E)\ch_1(F) - \ch_3(E)\ch_0(F).
	\end{align*}
It follows that for a nonzero class $v \in K(\PP^3)$, the conditions $\chi(-, v) = 0$ and $\chi(v,-) = 0$ are linear conditions in the Chern character basis for $K(\PP^3)$.

In particular, we will also consider the affine space patch on the projectivization $\PP(K(\PP^3)_{\QQ})$ defined by setting $\ch_0 \neq 0$. It is a three-dimensional rational vector space with basis $(\frac{\ch_1}{\ch_0}, \frac{\ch_2}{\ch_0}, \frac{\ch_3}{\ch_0})$. Any sheaf $E$ on $\PP^3$ with nonzero rank defines a unique point in this space.
\end{notation}

The following definition encodes the data of cohomological orthogonality among exceptional collections.

\begin{definition}
Let $E$, $G$, and $H$ be sheaves on $\PP^3$ (or classes in $K(\PP^3)$). Then, by Hirzebruch-Riemann-Roch, the conditions
	\[\chi(-, E) = \chi(G, -) = \chi(H, -) = 0\]
are linear conditions on $K(\PP^3) \simeq \Z^4$. When $E$, $G$, and $H$ are independent in $K(\PP^3)$, they determine a 1-dimensional subspace $L \subseteq \Z^4$; if $L$ is not contained in the locus $(\ch_0 = 0) \subseteq K(\PP^3)$, then it determines a unique point $\ell$ in the $\frac{\ch_i}{\ch_0}$-space defined above, and we define
	\[\perpp(E, G, H) := \ell \in \left\{ \frac{\ch_1}{\ch_0}, \frac{\ch_2}{\ch_0}, \frac{\ch_3}{\ch_0} \right\}.\] 
Explicit formulas for $\text{perp}(E,G,H)$ in terms of the Chern characters of $E$, $G$, and $H$ can be easily given, but are rather complicated and we will not use them, so we do not list them. 

When $(Z, E, G, H)$ are the foundation of a constructive helix on $\PP^3$, they form a full exceptional collection for $D^b(\PP^3)$. The right mutation bundle $F = R_E Z$ fits into a new exceptional collection $(E, F, G, H)$, and $F$ satisfies the linear conditions above. Similar conditions are satisfied by left mutations. The point $\ell$ above corresponding to $F$ is thus well-defined, and since $\ch_1$ and $\ch_0$ are coprime for exceptional bundles, determines the full Chern character of $F$ in terms of the Chern characters of $E$, $G$, and $H$. 
\end{definition}

\begin{example}
We compute perp for the exceptional collection $(\OO, T(-1), \OO(1), \OO(2))$, to recompute $\ch(T(-1))$. This can easily be done by using the defining exact sequence, but is more convenient to do with the perp operator in Theorem \ref{theorem}. 

The Chern character $\ch(T(-1))$ satisfies the orthogonality relations
	\[\chi(T(-1), \OO) = \chi(\OO(1), T(-1)) = \chi(\OO(2), T(-1)) = 0.\]
Set $\ch_i = \ch_i(T(-1))$. Via Riemann-Roch, these equations can be written
	\begin{align*}
		\chi(T(-1), \OO) &= \ch_0 - \frac{11}{6}\ch_1 + 2 \ch_2 - \ch_3 = 0 \\
		\chi(\OO(1), T(-1)) &= \ch_0 + \frac{11}{6}(\ch_1 - \ch_0) + 2(\ch_2 - \ch_1 + \frac{1}{2}\ch_0) \\ &\qquad + \ch_3 - \ch_2 + \frac{1}{2}\ch_1 - \frac{1}{6}\ch_0 = 0 \\
		\chi(\OO(2), T(-1)) &= \ch_0 + \frac{11}{6}(\ch_1 - 2\ch_0) + 2(\ch_2 - 2\ch_1 + 2\ch_0) \\ &\qquad + \ch_3 - 2\ch_2 + 2\ch_1 - \frac{4}{3}\ch_0 = 0
	\end{align*}
Dividing by $\ch_0 \neq 0$, we obtain three linear equations in the three variables $\frac{\ch_1}{\ch_0}$, $\frac{\ch_2}{\ch_0}$, and $\frac{\ch_3}{\ch_0}$, which is easily solved by hand or by computer. Then Riemann-Roch implies that $\ch_1$ and $\ch_0$ are coprime, so one can determine $\ch_0$ and the rest of the Chern character from these. One can then easily check that we get
	\[\ch(T(-1)) = \text{perp}(\OO, \OO(1), \OO(2)) = \left(3, 1, \frac{-1}{2}, \frac{1}{6}\right).\]
\end{example}

\begin{theorem} \label{theorem}
Let $F$ be a constructive exceptional bundle on $\PP^3$. Then there is a well-defined way to choose three other constructive exceptional bundles $E_1(F), E_2(F), E_3(F)$ on $\PP^3$ such that $(E_1(F), F, E_2(F), E_3(F))$ form a full exceptional collection for $\PP^3$. This association produces a bijection
	\[\epsilon_{\PP^3}: \{\text{3-adic rationals}\} \rightarrow \left\{\begin{array}{c} \text{Chern characters of constructive} \\ \text{exceptional bundles on } \PP^3 \end{array} \right\}\]
defined inductively as
	\[\epsilon_{\PP^3}(n) = \mathrm{ch}(\mathcal{O}(n)) = \left(1, n, \frac{n^2}{2}, \frac{n^3}{6}\right) \quad n \in \Z,\]
and for $q \geq 0$,
	\begin{align*}
		\epsilon_{\PP^3}\left( \frac{3p+1}{3^{q+1}} \right) &= \mathrm{perp}\left( \epsilon_{\PP^3}\left( \frac{p}{3^q} \right), E_2\left(\epsilon_{\PP^3}\left( \frac{p}{3^q} \right)\right), E_3\left( \epsilon_{\PP^3}\left( \frac{p}{3^q} \right)\right) \right) \\
		\epsilon_{\PP^3}\left( \frac{3p+2}{3^{q+1}} \right) &= \mathrm{perp}\left( E_1\left( \epsilon_{\PP^3}\left( \frac{p+1}{3^q} \right) \right), \epsilon_{\PP^3}\left( \frac{p+1}{3^q} \right), E_3\left( \epsilon_{\PP^3}\left( \frac{p+1}{3^q} \right) \right) \right).
	\end{align*}
\end{theorem}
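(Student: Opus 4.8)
The plan is to read the recursion as explicit bookkeeping of admissible mutations and to match it against the tree $\Gamma_{\PP^3}'$ of Proposition \ref{gamma-tree}. First I would pin down the well-defined collection $(E_1(F), F, E_2(F), E_3(F))$: by Proposition \ref{adm-replacement}, after twisting by a suitable $\OO(k)$ to bring $\mu(F)$ into $(0,1)$, every constructive exceptional bundle $F$ is the bundle introduced by the final admissible mutation creating some node of $\Gamma_{\PP^3}'$, and since $\Gamma_{\PP^3}'$ is a tree this node is unique. Choosing the foundation of that node with $F$ in the second slot — already the case after a right mutation, and achieved by a cyclic shift of the foundation after a left mutation — produces the canonical full exceptional collection. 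This makes $\epsilon_{\PP^3}$ well-defined by induction on the $3$-adic level $q$, since each $\text{perp}$ expression is the class forced to occupy slot $2$ of a full exceptional collection, hence the Chern character of a genuine mutation bundle, which is again exceptional and constructive.

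Next I would identify the two recursion rules with concrete mutations. Unwinding the definition of $\text{perp}$, the class $\epsilon_{\PP^3}\!\left(\frac{3p+1}{3^{q+1}}\right)$ is the $B$ with $(F, B, E_2(F), E_3(F))$ exceptional for $F = \epsilon_{\PP^3}(p/3^q)$, namely the right mutation $R_F(E_1(F))$ inserted immediately above $F$ in slope; symmetrically $\epsilon_{\PP^3}\!\left(\frac{3p+2}{3^{q+1}}\right)$ is the left mutation $L_{F'}(E_3(F'))$ inserted immediately below $F' = \epsilon_{\PP^3}((p+1)/3^q)$. Thus passing from level $q$ to level $q+1$ inserts, in each slope-gap between consecutive bundles $\epsilon_{\PP^3}(p/3^q)$ and $\epsilon_{\PP^3}((p+1)/3^q)$, exactly the two new bundles indexed by $3p+1$ and $3p+2$, tripling the number of gaps. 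This is the $\PP^3$ analogue of the dyadic mediant construction of Dr\'ezet and Le Potier, with ternary subdivision replacing binary.

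The technical core is an induction on $q$ establishing the invariant that the consecutive bundles $\ldots, \epsilon_{\PP^3}(p/3^q), \epsilon_{\PP^3}((p+1)/3^q), \ldots$ have strictly increasing slopes and that each consecutive pair occurs together in a single helix as part of a foundation. The slope-ordering of admissible mutations recorded in Proposition \ref{gamma-tree},
\[\mu(E) < \mu(R0) < \mu(L0) < \mu(F) < \mu(R1) < \mu(L1) < \mu(G) < \mu(R2) < \mu(L2) < \mu(H),\]
guarantees that the two inserted bundles have slopes strictly between those of their neighbors, yielding strict monotonicity of $\mu \circ \epsilon_{\PP^3}$ and hence injectivity, since distinct slopes force distinct Chern characters. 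The harder half of the invariant — that consecutive bundles stay adjacent in one helix, so that the next right or left mutation is actually defined — is where I expect the main obstacle: it must be propagated using the helix-condition relations of Section \ref{helix} together with the commuting and parallel-mutation structure of Definition \ref{par-mut-def}, which is precisely what forces the four bundles $\epsilon_{\PP^3}(p/3^q)$, $\epsilon_{\PP^3}((3p+1)/3^{q+1})$, $\epsilon_{\PP^3}((3p+2)/3^{q+1})$, $\epsilon_{\PP^3}((p+1)/3^q)$ to lie consecutively in a common helix at level $q+1$.

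Finally I would deduce surjectivity. Compatibility with twisting, $\epsilon_{\PP^3}(x+1) = \epsilon_{\PP^3}(x) \otimes \OO(1)$ — checked directly from the recursion via the substitution $p \mapsto p + 3^q$ and the fact that mutation commutes with tensoring by a line bundle — reduces the problem to slopes in $(0,1)$. There Proposition \ref{gamma-tree} ensures that every constructive exceptional bundle of slope in $(0,1)$ occurs in $\Gamma_{\PP^3}'$; tracing such a bundle back through the admissible mutation that creates it and matching that insertion against the gap-subdivision above exhibits it as $\epsilon_{\PP^3}(m/3^q)$ for a unique $3$-adic rational $m/3^q$, which completes the proof that $\epsilon_{\PP^3}$ is a bijection.
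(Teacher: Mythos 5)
Your architecture coincides with the paper's: twist to reduce to slopes in $(0,1)$, induct on the order, realize the recursion by admissible mutations inside $\Gamma_{\PP^3}'$, and extract injectivity and surjectivity from Propositions \ref{adm-replacement} and \ref{gamma-tree}. However, the proposal has a genuine gap at its center, and it shows up in two linked places. First, your identification of the recursion with a single mutation attached to the bundle --- that $\epsilon_{\PP^3}\bigl(\frac{3p+1}{3^{q+1}}\bigr)$ is the right mutation $R_F(E_1(F))$ for $F = \epsilon_{\PP^3}(p/3^q)$, where $(E_1(F),F,E_2(F),E_3(F))$ is the collection at the unique node of $\Gamma_{\PP^3}'$ creating $F$ --- cannot be correct as stated, because the data $E_i(F)$ then depends only on the bundle $F$. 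Take $F = T(-1) = \epsilon_{\PP^3}(1/3)$: the rationals $\frac{4}{9} = \frac{3\cdot 1 + 1}{3^2}$ and $\frac{10}{27} = \frac{3\cdot 3+1}{3^3}$ have parents $\frac{1}{3}$ and $\frac{3}{9}=\frac{1}{3}$, so your prescription assigns both the same value $R_{T(-1)}(E_1(T(-1)))$, contradicting injectivity; in the paper's table these are the distinct bundles $E_{4/11}$ and $E_{38/107}$, computed as perps of the different collections $(T(-1),\OO(1),\OO(2))$ and $(T(-1),E_{4/11},\OO(1))$. The distinguished collection must depend on the rational (equivalently, on the level-$(q+1)$ gap into which one inserts), not on the bundle alone, and constructing that level-dependent data is the actual content of the theorem.

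Second, you correctly locate the step that would supply this data --- the invariant that $3$-adically consecutive bundles at level $q$ sit adjacently in a common helix, so that each insertion is a well-defined mutation --- but you leave it as ``the main obstacle'' to be ``propagated using the helix-condition relations,'' with no argument given. That is precisely the part a proof cannot omit. The paper's proof takes a different route around it: it never maintains your two-sided adjacency invariant, but instead keys the construction to the reduced form of each rational and prescribes explicit descending paths of admissible mutations in the tree ($R1$, iterates of $R0$, $R2$ followed by iterates of $R0$, and the mirror $L$-cases); well-definedness and injectivity then follow because these paths use no inverse mutations in the tree $\Gamma_{\PP^3}'$, and surjectivity follows from the completeness statement in Proposition \ref{gamma-tree}. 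For what it is worth, your mediant-style invariant is true and does propagate: if $(A,B,C,D)$ is a common foundation for a gap, the two insertions are $N_1 = R_A(D(-4))$ and $N_2 = L_BC$, and the helix condition produces the foundation $(A,N_1,N_2,B)$, which hands common foundations to all three new gaps. Carried out, this would give a proof closer in spirit to Dr\'ezet--Le Potier on $\PP^2$ than the paper's; but as submitted, the inductive core is asserted rather than proven.
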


As in the statement of the theorem, we will sometimes abuse notation and denote by $\epsilon_{\PP^3}(t/3^q)$ both the Chern character and the constructive exceptional bundle with this Chern character. There is no ambiguity here because the definition of $\epsilon_{\PP^3}$ will determine a set of exact sequences which determine the both the Chern character and the sheaf.

\begin{definition}
When $\epsilon(\frac{3p+t}{3^q}) = \ch(E)$ and $\frac{3p+t}{3^q}$ is reduced, we call $q$ the \textit{order} of $E$. The order of a line bundle $\OO(a)$ is 0, the tangent and cotangent bundles $T, T^{\vee}$ are of order 1.
\end{definition}

The description of the graphs $\Gamma_{\PP^3}$ and $\Gamma_{\PP^3}'$ in Section \ref{graphs-section}, especially Proposition \ref{adm-replacement}, produce the required helix-theoretic information to prove the theorem.

\begin{proof}[Proof of Theorem \ref{theorem}.]
We reduce by twisting by line bundles to define the map on rationals between 0 and 1, which will take values in the Chern characters of constructive exceptional bundles $E$ whose slope $0 < \mu(E) < 1$. The proof will be inductive on the ``order'' $q$ of the rational numbers $t/3^q$. The base cases are when $q = 0$ or $1$; when $q = 0$ we have $\frac{t}{3^q} = t = 0$ or 1, and we assign these respectively to $\OO$ and $\OO(1)$. Both lie in the standard helix $\sigma_0$, but in this base case it won't be necessary to choose a foundation which contains them. When $q=1$, the construction below applies but it will be necessary to pick a foundation to start with. We have two rationals, $\frac{1}{3}$ and $\frac{2}{3}$, to assign, and we set
	\[\epsilon\left( \frac{1}{3} \right) = \mathrm{ch}(T(-1)) = \mathrm{perp}( \epsilon(0), \epsilon(1), \epsilon(2) ) = perp(\OO, \OO(1), \OO(2))\]
and
	\[\epsilon\left( \frac{2}{3} \right) = \mathrm{ch}(T^{\vee}(2)) = \mathrm{perp}(\epsilon(0), \epsilon(2), \epsilon(4)) = perp(\OO, \OO(2), \OO(3)).\]
We note that we obtain $T(-1)$ from $\sigma_0$ as the right mutation $R_{\OO}\OO(-1)$, and $T^{\vee}(2)$ as the left mutation $L_{\OO(1)}\OO(2)$.

In higher orders, we first define the value of $\epsilon$ on the rationals that will be associated to the right mutations, which are of the form $\frac{3p+1}{3^{q+1}}$. We distinguish the largest 3-adic rational smaller than $\frac{3p+1}{3^{q+1}}$ of order smaller than $q+1$, namely $\frac{p}{3^q}$. When $\frac{p}{3^q}$ is reduced, we may by induction write $\epsilon(\frac{p}{3^q}) = ch(F)$ and include $F$ into to a constructive helix $\sigma$, with a choice of foundation $(E,F,G,H)$. We have $p = 1$ or $2$ mod 3. In the first case, we apply the mutation $R1$ to $\sigma$ to obtain a helix with foundation $(F, R_FE, G, H)$. We set
	\[\epsilon\left( \frac{3p+1}{3^{q+1}} \right) = \text{ch}(R_FE) = \mathrm{perp}(E, G, H).\]
If $\frac{p}{3^q}$ is not reduced, write $\frac{p}{3^q} = \frac{p'}{3^{q'}}$ in reduced form, and set $d = q-q'$. By induction, we can write $\epsilon(\frac{p'}{3^{q'}}) = \ch(F)$ and include $F$ into a helix $\sigma$ with foundation $(E,F,G,H)$. When $p' = 1$ mod 3, we apply the mutation $R0$ $d$ times, and when $p = 2$ mod 3, we apply the mutation $R2$ once and then the mutation $R0$ $d-1$ times. Writing the foundation obtained from these mutations as $(E',F',G',\text{H}_{\PP^3}')$, with $F'$ being the bundle obtained in the final mutation, we set
	\[\epsilon\left( \frac{3p+1}{3^{q+1}} \right) = \text{ch}(F) = \mathrm{perp}(E', G', H').\]
Now, we assign values to rationals of the form $\frac{3p+2}{3^{q+1}}$, which will be the left mutations. We distinguish the smallest 3-adic rational larger than $\frac{3p+2}{3^{q+1}}$ of order smaller than $q+1$, namely $\frac{p+1}{3^q}$. If it is reduced, we write $\epsilon(\frac{p+1}{3^{q+1}}) = ch(G)$ and include $G$ into a foundation $(E,F,G,H)$ for a constructive helix $\sigma$ by induction. If $p+1 = 1$ mod 3, we apply the mutation $L0$ once; if $p+1 = 2$ mod 3, we apply the mutation $L1$ once. We then set
	\[\epsilon\left( \frac{3p+2}{3^{q+1}} \right) = \text{ch}(L_GH) = \mathrm{perp}(F, G, E(4)).\]
When $\frac{p+1}{3^q}$ is not reduced, we write $\frac{p+1}{3^q} = \frac{p''}{3^{q''}}$ in reduced form. Set $d = q - q''$. If $p'' = 1$ mod 3, then we perform we apply one $L1$ mutation and $d-1$ $L2$ mutations; if $p'' = 2$ mod 3, we apply the mutation $L2$ $d$ times. Writing $(E',F',G',H')$ for the foundation obtained from $(E,F,G,H)$ via these mutations with $G'$ the bundle obtained from the final mutation, we set
	\[\epsilon\left( \frac{3p+2}{3^{q+1}} \right) = \text{ch}(G') = \mathrm{perp}(F', H', E'(4)).\]
This defines $\epsilon$ entirely. We now show that it is a bijection.

First, we show that $\epsilon$ is injective. We apply Proposition \ref{adm-replacement}. The admissible mutations arising from the mutations of $\sigma_0$ are the mutations $R0, L0, R1, L1, R2, L2$, which are the mutations applied in the above construction. Since no inverses are applied, it follows that each such description describes a unique path on the graph $\Gamma'$. Since $\Gamma'$ is a tree, two rationals are sent to the same Chern character if and only if the mutations described above define the same path on $\Gamma'$, which produce the same exceptional bundle.

For surjectivity, we apply the classification statement in Proposition \ref{adm-replacement}, namely that every constructive exceptional bundle of slope $0 < \mu < 1$ appears in $\Gamma'$. For a constructive exceptional bundle $E$ of slope $0 < \mu(E) < 1$, we choose a sequence of admissible mutations, starting with $\sigma_0$, which terminates in a mutation producing $E$, as in Proposition \ref{adm-replacement}. Each of these mutations, by induction, has a preferred foundation which contains a unique new bundle with slope between 0 and 1. Let $\sigma' \mapsto \sigma$ be the last mutation in this sequence, so that $E$ is contained in $\sigma$, and let $F$ denote the new bundle in $\sigma'$ with slope between 0 and 1. If $F$ is a right mutation, we set $\epsilon_{\PP^3}((3p+1)/3^{q+1}) = \text{ch}(F)$, and if $F$ is a left mutation we set $\epsilon_{\PP^3}((3p+2)/3^{q+1}) = \text{ch}(F)$, and choose a preferred foundation as in the theorem statement in either case. 

We will only explicitly describe which triadic rational to assign to $E$ in the case where $F$ is a right mutation, the left mutation case being entirely similar. The rational numbers of order $q+2$ which correspond to the mutations of $\sigma$ are $(9p + 2)/3^{q+2}$, $(9p+4)/3^{q+2}$, and $(9p+5)/3^{q+2}$; we assign these to $R0$, $L0$, and $L1$, respectively. This concludes the proof.
\end{proof}

\subsection{Explicit computations} \label{explicit-section}
In this subsection we list the numerical and cohomological data for the first few exceptional bundles with slope between 0 and 1. Each appears in a foundation obtained from the standard helix with foundation $(\OO(-1), \OO, \OO(1), \OO(2))$ via admissible mutations. Each such mutation produces a standard resolution by lower order exceptional bundles; each also fits into a helix triangle which produces the same bundle as the opposite mutation. These mutations/resolutions will be called \textit{standard}, and we list them below; see Figure \ref{fig:table}. 

\begin{figure}[!] 
\adjustbox{scale = .8,center}{
	\renewcommand{\arraystretch}{1.3}
		\begin{tabular}{|c|c|c|c|c|c|c|} \hline
		$E_{\mu}$ & order & \begin{tabular}{c} standard \\ foundation  \end{tabular} & \begin{tabular}{c} standard \\ resolutions  \end{tabular} & Chern character & $\epsilon(\frac{p}{3^q})$ & \begin{tabular}{c} nonzero \\ cohomology  \end{tabular} \\  \hline 
		$E_n = \OO(n)$ & 0 & $(\OO(-1), \OO, \OO(1), \OO(2))$ & \textit{n/a} & $(1,n,\frac{n^2}{2}, \frac{n^3}{6})$ & $\epsilon(n)$ & $h^0 = {n+3 \choose 3}$ \\ \hline
		$E_{1/3} = T(-1)$ & 1 & $(\OO, T(-1), \OO(1), \OO(2))$ & $\begin{array}{c} \OO(-1) \rightarrow \OO^4 \rightarrow T(-1) \\ T(-1) \rightarrow \OO(1)^6 \rightarrow T^{\vee}(3) \end{array}$ & $(3,1,\frac{-1}{2}, \frac{1}{6})$ & $\epsilon(\frac{1}{3})$ & $h^0 = 4$ \\ \hline
		
		$E_{2/9} = R0(T(-1))$ & 2 & $(\OO, E_{2/9}, T(-1), \OO(1))$ & $\begin{array}{c} \OO(-2) \rightarrow \OO^{10} \rightarrow E_{2/9} \\ E_{2/9} \rightarrow T(-1)^4 \rightarrow T^{\vee}(2) \end{array}$ & $(9,2,-2,\frac{4}{3})$ & $\epsilon(\frac{1}{9})$ & $h^0 = 10$  \\ \hline
		
		$E_{5/17} = L1(T(-1))$ & 2 & $(\OO(-2), \OO, E_{5/17}, T(-1))$ & $\begin{array}{c} E_{5/17} \rightarrow T(-1)^6 \rightarrow \OO(1) \\ T(-3) \rightarrow \OO^{20} \rightarrow E_{5/17} \end{array}$ & $(17,5,\frac{-7}{2}, \frac{5}{6})$ & $\epsilon(\frac{2}{9})$ & $h^0 = 20$  \\ \hline
		
		$E_{4/11} = R1(T(-1))$ & 2 & $(T(-1), E_{4/11}, \OO(1), \OO(2))$ & $\begin{array}{c} \OO \rightarrow T(-1)^4 \rightarrow E_{4/11} \\ E_{4/11} \rightarrow \OO(1)^{20} \rightarrow E_{7/9}(1) \end{array}$ & $(11,4,-2, \frac{2}{3})$ & $\epsilon(\frac{4}{9})$ & $h^0 = 15$ \\ \hline
		
		$E_{3/19} = R0(E_{2/9})$ & 3 & $(\OO, E_{3/19}, E_{2/9}, T(-1))$ & $\begin{array}{c} \OO(-3) \rightarrow \OO^{20} \rightarrow E_{3/19} \\ E_{3/19} \rightarrow E_{2/9}^4 \rightarrow E_{5/17} \end{array}$ & $(19, 3, \frac{-9}{2}, \frac{9}{2})$ & $\epsilon(\frac{1}{27})$ & $h^0 = 20$ \\ \hline
		
		$E_{7/33} = L1(E_{2/9})$ & 3 & $(\OO, E_{7/33},E_{2/9},\OO(1))$ & $\begin{array}{c} E_{7/33} \rightarrow E_{2/9}^4 \rightarrow T(-1) \\ T^{\vee}(3) \rightarrow \OO(4)^{36} \rightarrow E_{7/33}  \end{array}$ & $(33,7,\frac{-15}{2}, \frac{31}{6})$ & $\epsilon(\frac{2}{27})$ & $h^0 = 36$ \\ \hline
		
		$E_{20/89} = R1(E_{2/9})$ & 3 & $(E_{2/9}, E_{20/89}, T(-1),\OO(1))$ & $\begin{array}{c} \OO \rightarrow E_{2/9}^{10} \rightarrow E_{20/89} \\ E_{20/89} \rightarrow T(-1)^{36} \rightarrow E_{3/19}^{\vee} \end{array}$ & $(89,20,-20,\frac{40}{3})$ & $\epsilon(\frac{4}{27})$ & $h^0 = 99$ \\ \hline
		
		$E_{29/99} = L1(E_{5/17})$ & 3 & $(\OO(-2), \OO, E_{29/99},E_{5/17})$ & $\begin{array}{c} E_{29/99} \rightarrow E_{5/17}^6 \rightarrow T(-1) \\ E_{5/17}(2) \rightarrow \OO(4)^{116} \rightarrow E_{29/99} \end{array}$ & $(99,29,\frac{-41}{2}, \frac{29}{6})$ & $\epsilon(\frac{5}{27})$ & $h^0 = 116$ \\ \hline
		
		$E_{100/339} = R2(E_{5/17})$ & 3 & $(E_{5/17},E_{100/339},T(-1),\OO(2))$ & $\begin{array}{c} \OO \rightarrow E_{5/17}^{20} \rightarrow E_{100/339} \\ E_{100/339} \rightarrow T(-1)^{116} \rightarrow E_{7/9}(1) \end{array}$ & $(339,100,-70,\frac{50}{3})$ & $\epsilon(\frac{7}{27})$ & $h^0 = 399$ \\ \hline
		
		$E_{18/59} = L2(E_{5/17})$ & 3 & $(\OO, E_{5/17}, E_{18/59}, T(-1))$ & $\begin{array}{c} E_{18/59} \rightarrow T(-1)^{20} \rightarrow \OO(2) \\ E_{2/9} \rightarrow E_{5/17}^4 \rightarrow E_{18/59} \end{array}$ & $(59,18,8,2)$ & $\epsilon(\frac{8}{27})$ & $h^0 = 70$ \\ \hline
		
		$E_{38/107} = R0(E_{4/11})$ & 3 & $(T(-1), E_{38/107},E_{4/11}, \OO(1))$ & $\begin{array}{c} \OO(-2) \rightarrow T(-1)^{36} \rightarrow E_{38/107} \\ E_{38/107} \rightarrow E_{4/11}^{10} \rightarrow T^{\vee}(2) \end{array}$ & $(107,38,-20,\frac{22}{3})$ & $\epsilon(\frac{10}{27})$ & $h^0 = 144$ \\ \hline
		
		$E_{79/219} = L1(E_{4/11})$ & 3 & $(\OO(-2), T(-1), E_{79/219}, E_{4/11})$ & $\begin{array}{c} E_{79/219} \rightarrow E_{4/11}^{20} \rightarrow \OO(1) \\ T(-3) \rightarrow T(-1)^{74} \rightarrow E_{79/219} \end{array}$ & $(219,79,\frac{-81}{2}, \frac{79}{6})$ & $\epsilon(\frac{11}{27})$ & $h^0 = 296$ \\ \hline
		
		$E_{15/41} = R1(E_{4/11})$ & 3 & $(E_{4/11}, E_{15/41}, \OO(1), \OO(2))$ & $\begin{array}{c} T(-1) \rightarrow E_{4/11}^4 \rightarrow E_{15/41} \\ E_{15/41} \rightarrow \OO(1)^{74} \rightarrow E_{7/33}^{\vee}(2) \end{array}$ & $(41,15,\frac{-15}{2},\frac{15}{6})$ & $\epsilon(\frac{13}{27})$ & $h^0 = 56$ \\ \hline
	\end{tabular}
}

\caption{The first few exceptional bundles, with numerical and cohomological invariants. Note that there is an involution on the set of exceptional slopes between 0 and 1 given by $\mu \mapsto 1 - \mu$, so the bundles which are descended from $T^{\vee}_{\mathbb{P}^3}(2)$ rather than $T_{\mathbb{P}^3}(-1)$ can be found from these quickly.}
\label{fig:table}
\end{figure}

These explicit computations and many others suggest the following conjecture.

\begin{conjecture}[Weak Brill-Noether for exceptional bundles] \label{wbn}
Let $E$ be a constructive exceptional bundle on $\PP^3$. Then $h^i(E) \neq 0$ for at most one $i$, and
	\[\chi(E) = \begin{cases} 
		h^0(E) & \text{if } \mu(E) \geq 0, \\
		-h^1(E) & \text{if } -4 < \mu(E) < 0 \text{ and } \chi(E) < 0, \\
		h^2(E) & \text{if } -4 < \mu(E) < 0 \text{ and } \chi(E) \geq 0, \\
		-h^3(E) & \text{if } \mu(E) < -4.
	\end{cases}\]
\end{conjecture}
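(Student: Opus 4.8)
The plan is to argue by induction on the order $q$ of $E$, using the standard resolutions recorded in Figure \ref{fig:table} (more precisely, the defining short exact sequences of the mutations that produce $E$), together with Serre duality to halve the number of cases. Since every twist $E(k)$ and the dual $E^\vee$ are again constructive exceptional (twisting is built into the helix structure, and dualizing sends $\sigma_0$ to itself while swapping left and right mutations), and since $H^i(E) \cong H^{3-i}(E^\vee(-4))^\vee$ with $\mu(E^\vee(-4)) = -\mu(E)-4$ and $\chi(E) = -\chi(E^\vee(-4))$, the four regimes pair off: $\mu(E) < -4$ is Serre-dual to $\mu(E) \geq 0$, and the two middle regimes $-4 < \mu(E) < 0$ are dual to one another (with $\chi(E) < 0$ dual to $\chi(E^\vee(-4)) > 0$). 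Thus it suffices to treat $\mu(E) \geq 0$, proving $H^{>0}(E) = 0$, and $-4 < \mu(E) < 0$ with $\chi(E) \geq 0$, proving $H^0 = H^1 = H^3 = 0$; in each case the formula for $\chi(E)$ and the ``at most one $i$'' assertion follow formally once the single surviving group is identified (and in the first case the nonvanishing $h^0(E) \neq 0$ is already guaranteed by Theorem \ref{gg}).

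For the base cases $q = 0, 1$ the cohomology of all twists is computed directly: line bundles from $h^0(\OO(n)) = \binom{n+3}{3}$ together with the standard vanishing (only $H^0$ for $n \geq 0$, all groups zero for $-4 < n < 0$, only $H^3$ for $n \leq -4$), and $T(-1)$, $T^\vee(2)$ from the Euler sequence and Bott's formulas. For the inductive step I would fix a standard resolution of $E$: if $E$ is produced by a right mutation it sits in $0 \to A \to B \to E \to 0$, and if by a left mutation in $0 \to E \to B \to C \to 0$, where $A, B, C$ are exceptional of strictly smaller order, so the cohomology of every twist of $A, B, C$ is known inductively and concentrated in a single degree. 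Twisting by $\OO(k)$ and taking the long exact sequence, one reads off $H^\bullet(E(k))$ from these; since the mutation forces $\mu(A) < \mu(B) < \mu(E)$ (Lemma \ref{mutations}), a bookkeeping of where the shifted slopes $\mu(A)+k, \mu(B)+k$ fall relative to $0$ and $-4$ shows that the concentration degrees of the neighbors are arranged to leave only the predicted group available to $E(k)$.

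The crux, and the step I expect to be the main obstacle, is showing that the relevant connecting homomorphisms have maximal rank: the induced maps on the single nonvanishing cohomology groups of $A(k)$ and $B(k)$ (which are exactly the evaluation/multiplication maps defining the mutation) must be injective or surjective as the bookkeeping requires, and Euler-characteristic data alone does not decide this. My primary plan is to exploit the $\Ext$-orthogonality of the ambient full exceptional collection: applying $\Hom(\OO(-k),-)$ to the defining sequence and comparing with the vanishing $\Ext^m(E_i,E_j) = 0$ for $i > j$ (all $m$) within the collection containing $A, B, E$ should pin down the rank of each boundary map in the required range of $k$. In parallel I would pursue the alternative of iterating the standard resolutions all the way down to a finite complex of direct sums of line bundles quasi-isomorphic to $E$ and running the hypercohomology spectral sequence $E_1^{p,q} = H^q(L_{-p}(k)) \Rightarrow H^{p+q}(E(k))$; here concentration of $H^\bullet(E(k))$ reduces to degeneration of the spectral sequence, which again amounts to the exactness of the line-bundle multiplication maps. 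Establishing this maximal-rank property \emph{uniformly across all twists} $k$ is the heart of the matter.
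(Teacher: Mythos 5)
You have set out to prove something the paper itself does not prove: the statement is Conjecture \ref{wbn}, and the paper explicitly records that it remains open, resting on the fact that ``certain maps among cohomology groups for exceptional bundles have maximal rank, which is very subtle,'' with only sufficiently positive cases known \cite[Corollary 3.4]{CHS}. Your proposal reproduces essentially this assessment from the inside: the framework (induction on order, Serre duality pairing $\mu < -4$ with $\mu \geq 0$ and the two middle regimes with each other, long exact sequences of the standard mutation resolutions) is reasonable bookkeeping, but every branch of the induction terminates at the step you yourself flag as ``the heart of the matter'' --- that the evaluation/coevaluation maps induce maps of maximal rank on cohomology of \emph{every} twist. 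Since that step is exactly the open content of the conjecture, what you have is a reduction of the conjecture to its known hard core, not a proof.

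Neither of your two ideas for closing that gap works as stated. The $\Ext$-orthogonality of the exceptional collection containing $A$, $B$, $E$ constrains $\Ext$-groups \emph{between members of that collection}; the twisting object $\OO(-k)$ is in general not a member of it, so applying $\Hom(\OO(-k),-)$ to the defining sequence produces no vanishing you are entitled to, and orthogonality says nothing about the functors $H^i(-\otimes\OO(k))$ for arbitrary $k$. The spectral-sequence variant is a reformulation rather than a method: degeneration of $E_1^{p,q} = H^q(L_{-p}(k)) \Rightarrow H^{p+q}(E(k))$ in the required range is precisely the maximal-rank statement for the line-bundle multiplication matrices, i.e.\ the open problem again. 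Finally, the paper's own example of the stable bundle $V$ with resolution $0 \to \OO(-4)^9 \to \OO(-3)^{12} \to V \to 0$, which is slope-stable of slope $0$ yet has all its cohomology in degree $2$, shows that slope bookkeeping on complexes of line bundles cannot by itself force the concentration pattern the conjecture predicts: any successful argument must use exceptionality in an essential way, and your induction never does --- it uses only the slopes and orders of the bundles in the resolutions, data that $V$ shares with genuinely exceptional examples.
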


Conjecture \ref{wbn} has been verified in a few cases where the exceptional bundles are sufficiently positive \cite[Corollary 3.4]{CHS}, but in general relies on proving that certain maps among cohomology groups for exceptional bundles have maximal rank, which is very subtle.

\section{Global generation} \label{gg-section}

Recall that given a mutation $\gamma: \sigma \mapsto \tau$, there are three mutations of $\tau$ that are $\gamma$-admissible; see Example \ref{adm-ex}. To prove global generation, we will only need to consider these mutations; see Figure \ref{fig:adm-graph} for a drawing of the these as a graph.

\begin{theorem} \label{gg}
If $E$ is a constructive exceptional bundle of slope $\mu \geq 0$, then $E$ is globally generated.
\end{theorem}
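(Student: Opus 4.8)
The plan is to argue by induction on the order of $E$, resting on one elementary principle: a quotient of a globally generated sheaf is globally generated, together with its companion, that tensoring a globally generated sheaf by $\OO(d)$ with $d \geq 0$ again yields a globally generated sheaf. First I would reduce to the range $0 \leq \mu(E) < 1$: putting $d = \lfloor \mu(E) \rfloor \geq 0$, the twist $E(-d)$ is a constructive exceptional bundle of slope in $[0,1)$ and $E = E(-d) \otimes \OO(d)$, so by the companion principle it suffices to generate $E(-d)$. By Proposition \ref{gamma-tree} every such bundle other than $\OO$ appears in a helix of $\Gamma_{\PP^3}'$, so I would induct on the order $q$ of $E$; the base case $q=0$ is $E = \OO$, which is globally generated.

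For the inductive step, recall (Figure \ref{fig:table}) that the admissible mutation producing $E$ equips it with two ``standard resolutions'': the mutation exact sequence itself, and the helix-triangle resolution coming from the opposite mutation. Exactly one of these presents $E$ as a quotient
\[0 \rightarrow A \rightarrow B^{\oplus k} \rightarrow E \rightarrow 0\]
of a sum of copies of a second exceptional bundle $B$; the other presents $E$ as a subsheaf of such a sum and is useless for generation. Granting that the quotient bundle $B$ has order strictly smaller than that of $E$ and satisfies $\mu(B) \geq 0$, the inductive hypothesis applies to $B$ (directly if $\mu(B) \in [0,1)$, and after untwisting $B$ into $[0,1)$ via the companion principle if $\mu(B) \geq 1$, as happens when $B = \OO(4)$); then $B^{\oplus k}$ is globally generated and so is its quotient $E$.

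The heart of the matter, and the step I expect to be the main obstacle, is producing this quotient presentation with its two numerical properties. When $E = R_B Z$ is introduced by a right mutation, the defining coevaluation sequence is already the required quotient, with $B$ the bundle mutated across. When $E = L_B W$ is introduced by a left mutation the defining sequence is only a subsheaf presentation, and I would instead invoke the helix condition of Section \ref{helix} to rewrite $E$ as an iterated right mutation, extracting the quotient presentation from the far side of the helix triangle; the same device repairs those right mutations whose defining sequence happens to quotient by a bundle of negative slope. The order bound on $B$ is immediate from the construction of $\epsilon_{\PP^3}$ in Theorem \ref{theorem}, since in the foundation in which $E$ is produced every other member has strictly smaller order. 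The genuinely delicate point is the inequality $\mu(B) \geq 0$: this is exactly where the hypothesis $\mu(E) \geq 0$ is consumed — for negative slopes the natural quotient bundle can itself be negative and fail to be globally generated — and establishing it uniformly across the six mutation types $R0, R1, R2, L0, L1, L2$ requires a careful reading of the slope orderings in Lemma \ref{mutations} and Proposition \ref{gamma-tree}.
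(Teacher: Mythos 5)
Your skeleton is the same as the paper's: induct on the order, use the fact that a right mutation's defining sequence $0 \to F \to A \otimes \Hom(F,A)^{\vee} \to E \to 0$ already exhibits $E$ as a quotient of copies of its left parent $A$, and in the left-mutation case invoke the helix relation of Section \ref{helix} (in the form $L_C = R_B R_{F(-4)}$) to convert the subsheaf presentation into an iterated right mutation whose last step $0 \to E' \to B \otimes \Hom(E',B)^{\vee} \to E \to 0$ exhibits $E$ as a quotient of copies of $B = \text{par}_L(E)$. The twisting reduction to slope in $[0,1)$ and the observation that the quotient bundle has strictly smaller order are both fine and consistent with what the paper does.

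However, the step you defer --- the inequality $\mu(B) \geq 0$ for the quotient bundle --- is not a loose end; it is the entire technical content of the paper's proof, and the source you point to for it cannot supply it. The slope orderings of Lemma \ref{mutations} and Proposition \ref{gamma-tree} only give inequalities of the form $\mu(A) < \mu(B) < \mu(E) < \mu(C)$ within a foundation, i.e. \emph{upper} bounds on the parent's slope by the slope of $E$; what is needed is a \emph{lower} bound, and $\mu(B) < \mu(E)$ together with $\mu(E) \geq 0$ says nothing of the sort. The paper gets the lower bound arithmetically rather than from slope orderings: using Theorem \ref{theorem} it identifies, for each left mutation $L0$, $L1$, $L2$ written out on foundations, the 3-adic label of the left parent of the new bundle, and proves by induction on the numerators of these labels (combined with the fact that $\eta \mapsto \mu(\epsilon_{\PP^3}(\eta))$ is increasing) that $\text{par}_L(\eta) \geq 0$ whenever $\eta > 0$; for instance in the $L2$ case one checks that if the numerator of the label of the new bundle is negative then so is the numerator $3p+2$ of the label of its parent. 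The same label arithmetic, not slope orderings, is what handles the right-mutation case: there $\ch(A) = \epsilon_{\PP^3}(p/3^{q-1})$ and $(3p+1)/3^q > 0$ forces $p \geq 0$, hence $\mu(A) \geq 0$, so no ``repair'' of right mutations is ever needed --- your worry that a right mutation might quotient by a negative-slope bundle is itself a sign that this computation is missing. Until you carry out this label-by-label analysis of the mutations in Figure \ref{fig:adm-graph}, your induction has no engine and the argument is incomplete.
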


\begin{proof}
First, recall in the proof of Theorem \ref{theorem} that to each constructive exceptional bundle $E$ we have associated a distinguished helix $\sigma = \sigma(E)$ containing $E$. We say $E$ is a right mutation if the final mutation forming $E$ in the sequence described in the classification is a right mutation, and similary for left mutations. If $E$ is a right mutation we include it into a foundation for $\sigma$ of the form $(A, E, B, C)$, and the Chern character of $E$ is associated to a rational number of the form $\frac{3p+1}{3^q}$. If $E$ is a left mutation we include it into a foundation for $\sigma$ of the form $(A, B, E, C)$, and the Chern character of $E$ is associated to a rational number of the form $\frac{3p+2}{3^q}$. In each case, $q = \text{ord}(E)$. 

To prove the theorem we induct on the order of $E$. If the order of $E$ is zero then $E$ is a line bundle, and global generation when $\mu(E) \geq 0$ is clear. Otherwise, we may write $E$ as a left or right mutation
	\[0 \rightarrow E \rightarrow C \otimes \Hom(C,F) \rightarrow F \rightarrow 0, \quad 0 \rightarrow F \rightarrow A \otimes \Hom(F,A)^{\vee} \rightarrow E \rightarrow 0\]
where $C$ (resp. $A$) has order one less than $E$. 

We address the right mutation case first. In this case, by construction we have $\ch(A) = \epsilon_{\PP^3}(\frac{p}{3^{q-1}})$. Whenever $\mu(\epsilon_{\PP^3}(x)) > 0$, we have $x > 0$. Thus we have $3p+1 > 0$, hence $p \geq 0$. So since $\epsilon_{\PP^3}$ is an increasing function, $\mu(A) \geq 0$. By induction on the order we can assume $A$ is globally generated, so $E$ is a quotient of a globally generated bundle. It follows that $E$ is globally generated. 

In the left mutation case, the first step is to reduce to the case of a right mutation using the helix relation. Recall that in this case, $\sigma$ has a foundation $(A,B,E,C)$. Writing $E = L_CF$, the helix from which $E$ is formed by mutation has foundation $(A,B,C,F)$. The helix relation takes the form $L_C = R_BR_{F(-4)}$. These two right mutations are defined via exact sequences
	\[0 \rightarrow F(-4) \rightarrow A \otimes \Hom(F(-4), A)^{\vee} \rightarrow R_AF(-4) = E' \rightarrow 0\]
and
	\[0 \rightarrow E' \rightarrow B \otimes \Hom(E',B)^{\vee} \rightarrow R_BE' = E \rightarrow 0.\]

The next step is to show that $\mu(B) \geq 0$, in which case we can again use induction on the order to deduce that $B$ is globally generated, from which it will again follow from the second exact sequence that $E$ is globally generated as well.

We first introduce some notation and terminology. Given a constructive exceptional bundle $M$ on $\PP^3$, we can include it as in Theorem \ref{theorem} into a helix $\sigma(M)$ of order $\text{ord}(M)$. We set $\text{par}_L(M)$ and $\text{par}_R(M)$, respectively, to be the ``parent'' bundles directly to the left and right of $M$ in $\sigma$, and writing $\epsilon_{\PP^3}(\eta) = \ch(M)$, we write $\text{par}_L(\eta)$ and $\text{par}_R(\eta)$ for the rationals $\epsilon_{\PP^3}^{-1}(\text{par}_L(M))$ and $\epsilon_{\PP^3}^{-1}(\text{par}_R(M))$, respectively. Write $\epsilon_{\PP^3}(\eta) = E$, for the bundle $E$ appearing in this proof. Because $E$ is a quotient of a direct sum of copies of the bundle $B = \text{par}_L(E)$, we need to show that $\mu(\text{par}_L(E)) \geq 0$. The projection $\eta \mapsto \epsilon_{\PP^3}(\eta) \mapsto \mu(\epsilon_{\PP^3}(\eta))$ is increasing by slope-stability, so it is enough to check that $\text{par}_L(\eta) \geq 0$ when $\eta > 0$.

Using Theorem \ref{theorem} we can isolate the left mutations producing exceptional bundles and prove this by induction. As in Figure \ref{fig:adm-graph}, there are only three left mutations to consider: $L0$, $L1$, and $L2$. We will describe these mutations explicitly. We set $\eta = (3p+2)/3^{q+1}$. On foundations, we have:
	\begin{align*}
		L0: (B,\underline{C},F,A(4))& \mapsto (B, L_CF, C, A(4)) \\
		L1: (B,\underline{C},F,A(4))& \mapsto (B, L_CF, C, A(4)) \\
		L2: (A, \underline{B}, C, F) &\mapsto (A, B, L_FC, C).
	\end{align*}
where underlined bundles are new. In the first two mutations, $\text{par}_L(C) = B$ and $C$ has order less than $E = L_CF$, so we conclude by induction. In the third mutation ($L2$) we argue as follows. We will show the contrapositive: it is enough to show that if the numerator of $\epsilon_{\PP^3}^{-1}(B)$ is negative, then the numerator of $\epsilon_{\PP^3}^{-1}(L_CF)$ is nonpositive. In this case, $B$ is a left mutation, and we write $\epsilon_{\PP^3}^{-1}(B) = (3p+2)/3^q$, and we have $\epsilon_{\PP^3}^{-1}(C) = (p+1)/3^q$ and $\epsilon_{\PP^3}^{-1}(L_CF) = (3(p+1)-1)/3^{q+1}$. The numerator of $\epsilon_{\PP^3}^{-1}(B)$ is negative if and only if $p < -2/3$, if and only if the numerator of $\epsilon_{\PP^3}^{-1}(L_CF)$ is $3p+2 < 0$, as required. This concludes the proof.
\end{proof}

\begin{remark}
The same statement is true for $\PP^2$; see \cite[Proposition 4.4]{CHK}. The proof is substantially simpler on $\PP^2$, for two reasons. First, it is possible to explicitly identify the ``parent'' bundles of an exceptional bundle on $\PP^2$, and the case of a left mutation is indistinguishable from the case of a right mutation.
\end{remark}

\end{document}